\def\@maketitle{%
  \newpage%\null
  \vskip0.5em%
  UDK \udk%
  \vskip0.5em%
  %MSC \msc%
  \vskip1em%
  \begin{center}%
  \let\footnote\thanks%
   %{\Large\lineskip .5em\begin{tabular}[t]{c}\@author\end{tabular}\par}%
   {\Large\@author\par}%
   \vskip1.5em%
   {\bf\LARGE\@title\par}%
   \vskip1em%
   %{\large\@date}%
  \end{center}%
  \par
  \vskip1.5em}
\def\@title{\@latex@warning@no@line{No \noexpand\title given}}
\renewcommand{\thefootnote}{\arabic{footnote}}
\renewcommand{\thanksmarkseries}[1]{%
  \def\@bsmarkseries{\renewcommand{\thefootnote}{\arabic{footnote}}}}
\title{Topological and homological properties\\
of the orbit space\\
of a~simple three-dimensional\\
compact linear Lie group}
\author{Styrt O.\,G.\thanks{Russia, MIPT, oleg\_styrt@mail.ru}}
\newcommand{\udk}{512.815.1+512.815.6+512.816.1+512.816.2}
\newdimen\defskip
\newtheorem{theorem}{Theorem}
\newtheorem{lemma}{Lemma}
\newtheorem{prop}{Proposition}
\newtheorem{stm}{Statement}
\newtheorem{imp}{Corollary}
\newcommand{\parr}{\par\addvspace{\defskip}}
\newcommand{\deff}[2]{\newenvironment{#1}{\parr\textbf{#2.}}{\parr}}
\def\@thm#1#2#3{%
  \ifhmode\unskip\unskip\par\fi
  \normalfont
  \trivlist
  \let\thmheadnl\relax
  \let\thm@swap\@gobble
  \thm@notefont{\bfseries\upshape}%
  \thm@headpunct{.}% add period after heading
  \thm@headsep 5\p@ plus\p@ minus\p@\relax
  \thm@space@setup
  #1% style overrides
  \@topsep \thm@preskip               % used by thm head
  \@topsepadd \thm@postskip           % used by \@endparenv
  \def\@tempa{#2}\ifx\@empty\@tempa
    \def\@tempa{\@oparg{\@begintheorem{#3}{}}[]}%
  \else
    \refstepcounter{#2}%
    \def\@tempa{\@oparg{\@begintheorem{#3}{\csname the#2\endcsname}}[]}%
  \fi
  \@tempa
}
\def\proof{\relax\ifmmode \blacktriangleleft
           \mskip 36mu plus 0mu minus 18mu
           \else
           \par\addvspace\medskipamount\noindent$\blacktriangleleft$\ \fi}
\def\endproof{\ifmmode \mskip 36mu plus 0mu
                        minus 18mu\blacktriangleright
              \else
              \unskip~$\blacktriangleright$\par
              \vskip\medskipamount\fi}
\newcommand{\Ss}{\textup{\S\,}}
\newcommand{\Sss}{\textup{\S\S\,}}
\newcommand*{\clei}{\nobreak\hskip\z@skip}
\DeclareRobustCommand*{\ti}{~\textemdash{} }
\DeclareRobustCommand*{\dh}{\clei\hbox{-}\clei}
\renewcommand{\:}{\textup{:}}
\renewcommand{\~}{\textup{;}}
\newcommand*{\bw}[1]{#1\nobreak\discretionary{}{\hbox{$\mathsurround=0pt #1$}}{}}
\newcommand{\sco}{,\ldots,}
\newcommand{\sge}{\bw\ge\ldots\bw\ge}
\newcommand{\sop}{\bw\oplus\ldots\bw\oplus}
\newcommand{\sti}{\bw\times\ldots\bw\times}
\newcommand{\ha}[1]{\left\langle#1\right\rangle}
\newcommand{\br}[1]{\bigl(#1\bigr)}
\newcommand{\ter}[1]{\textup{(}#1\textup{)}}
\newcommand{\hn}[1]{\left\|#1\right\|}
\newcommand{\bs}[1]{\bigl[#1\bigr]}
\newcommand{\bc}[1]{\bigl\{#1\bigr\}}
\newcommand{\mbb}{\mathbb}
\newcommand{\mbf}{\mathbf}
\newcommand{\mcl}{\mathcal}
\newcommand{\mfr}{\mathfrak}
\newcommand{\R}{\mbb{R}}
\newcommand{\Z}{\mbb{Z}}
\newcommand{\N}{\mbb{N}}
\newcommand{\T}{\mbb{T}}
\newcommand{\Cbb}{\mbb{C}}
\newcommand{\Hbb}{\mbb{H}}
\newcommand{\Zc}{\mcl{Z}}
\newcommand{\ggt}{\mfr{g}}
\newcommand{\suml}[2]{\sum\limits_{{#1}}^{{#2}}}
\newcommand{\Ga}{\Gamma}
\newcommand{\la}{\lambda}
\newcommand{\ph}{\varphi}
\newcommand{\GL}{\mbf{GL}}
\newcommand{\Or}{\mbf{O}}
\newcommand{\SO}{\mbf{SO}}
\newcommand{\SU}{\mbf{SU}}
\newcommand{\sog}{\mfr{so}}
\newcommand{\sug}{\mfr{su}}
\DeclareMathOperator{\Lie}{Lie}
\DeclareMathOperator{\Ker}{Ker}
\DeclareMathOperator{\Ad}{Ad}
\DeclareMathOperator{\Aut}{Aut}
\DeclareMathOperator{\In}{In}
\DeclareMathOperator{\rk}{rk}
\DeclareMathOperator{\Spec}{Spec}
\newenvironment{nums}[1]{\begin{enumerate}}{\end{enumerate}}
\renewcommand{\ge}{\geqslant}
\renewcommand{\le}{\leqslant}
\newcommand{\bes}{\infty}
\newcommand{\es}{\varnothing}
\newcommand{\subs}{\subset}
\newcommand{\sups}{\supset}
\newcommand{\sm}{\setminus}
\newcommand{\cln}{\colon}
\newcommand{\Lra}{\Leftrightarrow}
\newcommand{\hra}{\hookrightarrow}
\newcommand{\dv}{\smash{\mskip3mu\lower1pt\hbox{\vdots}\mskip3mu}}
\newcommand{\thra}{\twoheadrightarrow}
\renewcommand{\refname}{References}
\begin{document}

\maketitle

The article is devoted to the question whether the orbit space of a~compact linear group is a~topological manifold and a~homological manifold. In the paper, the case of a~simple three-dimensional group is considered. An upper
bound is obtained for the sum of the half-dimension integral parts of the irreducible components of a~representation whose quotient space is a~homological manifold, that enhances an earlier result giving the same bound if the
quotient space of a~representation is a~smooth manifold. The most of the representations satisfying this bound are also researched before. In the proofs, standard arguments from linear algebra, theory of Lie groups and algebras
and their representations are used.

\smallskip

\textit{Key words}\:
Lie group, linear representation of a~group, topological quotient space of an action, topological manifold, homological manifold.

\section{Introduction}\label{introd}

Consider a~faithful linear representation of a~compact Lie group~$G$ in a~real vector space~$V$. The question is, whether the
topological quotient space $V/G$ of this action is a~topological manifold, and whether it is a~homological manifold. Further, for brevity, we will say
simply <<manifold>> for a~topological manifold.

Without loss of generality, we can suppose that $V$ is a~Euclidian space, $G$ is a~Lie subgroup of the group $\Or(V)$, and the representation $G\cln V$ is tautological.

This problem was researched in~\cite{MAMich,Lange} for finite groups. Besides, the author's papers \cite{My1,My2,My3,My4} study both topological and
differential-geometric properties of the quotient space for different group classes\: for groups with commutative connected component~\cite{My1} and for simple groups
of classical types~\cite{My2,My3,My4}. The author's articles \cite{homoarx,homo,homo1} also consider groups with commutative connected component and strengthen the
<<topological>> part of the results of~\cite{My1}. This paper gives a~similar strengthening of the results of~\cite{My2} for simple three-dimensional groups.

Denote by~$G^0$ the connected component of the group~$G$ and by~$\ggt$ its Lie algebra.

Suppose that $\ggt\cong\sug_2$\ti or, equivalently, that the group~$G^0$ is isomorphic to one of the groups $\SU_2$ and $\SO_3$.

Let $n_1\sco n_L$ be the dimensions of irreducible components of the representation $\ggt\cln V$ with considering multiplicities and in descending
order. Since the representation $G\cln V$ is faithful, we have $n_1\sge n_l>1=n_{l+1}=\dots=n_L$, where $l\in\{1\sco N\}$. Denote by $q(V)$ the number
$\suml{i=1}{L}\bs{\frac{n_i}{2}}=\suml{i=1}{l}\bs{\frac{n_i}{2}}\in\N$.

The main result of the article is the following theorem.

\begin{theorem}\label{main} If $\ggt\cong\sug_2$ and $V/G$ is a~homological manifold, then $q(V)\le4$.
\end{theorem}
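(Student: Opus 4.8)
The plan is to reduce, by the differentiable slice theorem, the global question about $V/G$ to a family of purely local ones about sphere quotients, and then to exploit the very restricted representation theory of $\ggt\cong\sug_2$. Recall that a neighbourhood of the class of a point $x\in V$ in $V/G$ is homeomorphic to a neighbourhood of the origin in $N_x/G_x$, where $G_x$ is the stabiliser and $N_x$ the normal (slice) space; since $N_x/G_x$ is the cone over $S(N_x)/G_x$, the space $V/G$ is a homological manifold if and only if $S(N_x)/G_x$ is a homology sphere (over $\Z$) for every $x$. First I would fix a maximal torus $T\cong\Un_1$ of $G^0$ and its normaliser $N=N_{G^0}(T)$, with Weyl group $N/T\cong\Z/2$, and record that up to conjugacy every stabiliser has identity component $1$, $T$, or $G^0$, so the only slice representations to analyse are those of finite groups, of $T$ and $N$, and (trivially) of $G^0$.

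The key bookkeeping step is to translate $q(V)$ into weight data for $T$. Using the classification of the real irreducible representations of $\sug_2$ (the real type ones $V_d$, $d$ even, of dimension $d+1$, and the quaternionic type ones, $d$ odd, of real dimension $2(d+1)$) together with their $T$-weights $d,d-2,\sco-d$, I would check that in each case the number of non-zero weight planes contributed by a component equals the integer part of its half-dimension. Summing, this yields the clean identity $q(V)=\dim_{\Cbb}\hr{V\ominus V^T}$, i.e. $q(V)$ is exactly the number of conjugate pairs of non-zero $T$-weights on $V$.

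Then comes the main computation. Suppose first that $V$ has a real type non-trivial component, so that $V^T$ strictly contains the pointwise-fixed subspace $V^{G^0}$ and a generic $x\in V^T$ has one-dimensional stabiliser; the orbit tangent space is the single root plane, so the slice quotient reduces, after splitting off the fixed factor, to $\Un_1$ (and possibly the Weyl involution) acting on $M\cong\Cbb^{q-1}$ with all weights non-zero, where $q=q(V)$. Here $S(M)/T$ is a weighted projective space, hence a rational homology $\CP^{q-2}$, and the Weyl element, which interchanges the weights $\mu$ and $-\mu$, acts on it as a complex conjugation, i.e. by $(-1)^i$ on $H^{2i}$. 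Via the transfer isomorphism $H^*(X/(\Z/2);\Q)\cong H^*(X;\Q)^{\Z/2}$ I would then find that $S(M)/N$ has rational cohomology concentrated in degrees divisible by $4$, so it fails to be a rational homology sphere as soon as $q\ge5$; since a homological manifold over $\Z$ is one over $\Q$, this forces $q\le4$. If instead some real component has $d\equiv2\pmod4$, the Weyl group acts on the corresponding zero-weight line by $(-1)^{d/2}=-1$, no involution is available at the generic slice, and the sharper bound $q\le3$ results.

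The remaining obstacle, which I expect to be the hardest part, is the case in which every non-trivial component is of quaternionic type, so that $V^T=V^{G^0}$ and the torus supplies no fixed directions to slice at. Here the central element $-\idb\in\SU_2$ acts as $-\id$ on $V$, so that $S(V)/G$ factors through $\RP^{2q-1}=S(V)\bb\hc{\pm\idb}$ and the relevant isotropy subgroups become finite. I would analyse these finite slice quotients by Smith theory together with the known criteria for quotients of finite groups, again bounding $q$. Controlling the $\Z/2$- and $\Z/p$-homology of these projective-type quotients, rather than merely their rational homology, is the delicate point that separates the homological statement from the earlier smooth one and provides the enhancement.
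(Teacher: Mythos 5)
Your overall architecture (slice theorem, cone/homology-sphere criterion, the identity $q(V)=\dim_{\Cbb}(V\ominus V^T)$, and the dichotomy according to whether some vector has one-dimensional stabiliser, equivalently whether some component is of real type) matches the paper. Your first case is handled by a different route\ti a direct computation with weighted projective spaces and the transfer\ti where the paper instead quotes its earlier torus results (Corollary~\ref{ple3}: $\dim(\ggt_v N_v)\le6$, hence $2q(V)\le 8$); your version is plausible in outline, though you should be careful that $M_x$ may contain zero $T$\dh weights when $G_x$ is strictly larger than $T$ (so $S(M_x)/T$ is a join with a sphere rather than a weighted projective space), and that $G_x$ may have extra finite components beyond the Weyl involution.

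The genuine gap is your final case, where every component is of quaternionic type (dimension divisible by $4$), no vector has a one-dimensional stabiliser, and $q(V)>4$. This is where essentially all of the paper's work lies, and "Smith theory together with the known criteria for quotients of finite groups" is not an argument. The paper's route is: use Lange's structure theorem (finite groups with homological-manifold quotient are products of a pseudoreflection group and Poincar\'e groups) on each slice $N_v/G_v$ to prove that every stabiliser $G_v$ ($v\ne0$) is perfect and lies in $\Ker\Ad$ (Theorem~\ref{stab}); deduce successively that $\Ker\Ad$ is commutative, that all stabilisers are trivial, and that $G=G^0\cong\SU_2$; and then obtain a contradiction \emph{globally}, not slice-by-slice: the free action makes $S\thra S/G$ a bundle with fibre $S^3$, so $S/G$ is a simply connected homology sphere, hence homeomorphic to $S^m$ with $m=\dim S-3>4$, yet $\pi_4(S/G)\cong\pi_3(\SU_2)\cong\Z$. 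Your sketch contains neither the reduction to free actions nor the $\pi_3(\SU_2)=\Z$ obstruction, and passing to $\RP^{2q-1}$ does not make the isotropy analysis tractable by Smith theory alone; as written, the proposal does not prove the theorem in this case.
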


\section{Auxiliary facts}\label{facts}

This section contains a~number of auxiliary notations and statements, including the ones taken from the articles cited (all new statements are with proofs).

\begin{lemma}\label{prop} Let $X$ be a~topological space and $n$ a~positive integer.
\begin{nums}{-1}
\item If $X$ is a~simply connected homological $n$\dh sphere, then $X\cong S^n$.
\item The cone over the space~$X$ is a~homological $(n+1)$\dh manifold if and only if $X$ is a~homological $n$\dh sphere.
\end{nums}
\end{lemma}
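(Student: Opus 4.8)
The plan is to handle the two assertions by reducing each to the local and global homology of the spaces involved. Throughout I use the standard definition of a homological $m$\dh manifold as a space $Y$ with $H_k(Y,Y\sm\{y\};\Z)\cong\Z$ for $k=m$ and $0$ otherwise at every point $y$, and of a homological $m$\dh sphere as a homological $m$\dh manifold $Y$ with $\wt H_*(Y;\Z)\cong\wt H_*(S^m;\Z)$.

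For the second assertion I would compute the local homology of the cone $CX$, with vertex~$v$, at each of its points. At~$v$, since $CX$ is contractible and $CX\sm\{v\}$ deformation retracts onto~$X$, the long exact sequence of the pair gives $H_k(CX,CX\sm\{v\})\cong\wt H_{k-1}(X)$; this equals $\Z$ for $k=n+1$ and $0$ otherwise exactly when $\wt H_*(X)\cong\wt H_*(S^n)$. At a point $x$ other than~$v$ a neighbourhood is homeomorphic to $U\times\R$ with $U$ open in~$X$, so by excision and the relative K\"unneth formula (using that $H_*(\R,\R\sm\{t\})$ is concentrated in degree~$1$) one gets $H_k(CX,CX\sm\{x\})\cong H_{k-1}(X,X\sm\{p\})$; this is the local homology of a manifold in dimension $n+1$ exactly when $X$ is a homological $n$\dh manifold at~$p$. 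Combining the two computations, $CX$ is a homological $(n+1)$\dh manifold if and only if $X$ is simultaneously a homological $n$\dh manifold and a homological sphere, i.e. if and only if $X$ is a homological $n$\dh sphere; both implications then follow at once.

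For the first assertion the plan is to fix first the homotopy type and then the topology. Since $X$ is simply connected with $\wt H_i(X)=0$ for $i<n$ and $H_n(X)\cong\Z$, the Hurewicz theorem gives $\pi_n(X)\cong H_n(X)\cong\Z$, and a generator is realized by a map $f\cln S^n\to X$ inducing an isomorphism on all homology; since $X$ has the homotopy type of a CW complex (which holds for the homological manifolds arising here), the Whitehead theorem upgrades $f$ to a homotopy equivalence $X\simeq S^n$. (For $n=1$ there is nothing to prove, as $H_1(X)\cong\Z$ contradicts simple connectedness.) It remains to pass from a homotopy equivalence to a homeomorphism, and this is where the real difficulty lies: one must know that $X$ is a genuine topological manifold and then invoke the generalized Poincar\'e conjecture in the form that a closed topological $n$\dh manifold homotopy equivalent to~$S^n$ is homeomorphic to~$S^n$ (classical for $n\le2$, Perelman for $n=3$, Freedman for $n=4$, Smale and Newman for $n\ge5$). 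The manifold-recognition step is the main obstacle, since homological manifolds need not be topological manifolds: for small~$n$ I would use that every homological manifold of dimension $\le2$ is a topological manifold and conclude directly, while in general one appeals to the recognition theory for homology manifolds, or to the fact that the spaces~$X$ relevant to this paper are already known to be manifolds.
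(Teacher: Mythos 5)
The paper does not actually prove this lemma: its ``proof'' is a citation of Theorem~2.3 and Lemma~2.6 of \cite{Lange}, so there is no internal argument to compare yours against. Your argument for assertion~2 is correct and is essentially the standard computation behind the cited Lemma~2.6: local homology at the vertex via contractibility of the cone and the deformation retraction of the punctured cone onto~$X$, and at the remaining points via excision and the product formula for the local homology of $U\times\R$. (Minor point: this requires the \emph{open} cone, which is what the paper implicitly uses when it treats $V/G$ as the cone over $S/G$; for the closed cone the points of the base do not have neighbourhoods of the form $U\times\R$.)

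For assertion~1 the gap you flag is genuine and cannot be closed at the stated level of generality: with the bare definition of a homological sphere (correct local homology together with the global homology of $S^n$), the statement is false. The suspension of the Poincar\'e homology $3$\dh sphere is a compact, simply connected homological $4$\dh sphere, yet it is not homeomorphic to $S^4$\ti it is not even a topological manifold, because arbitrarily small punctured neighbourhoods of a cone point are homotopy equivalent to the Poincar\'e sphere and so have nontrivial fundamental group. Hence Hurewicz\dh Whitehead\dh Poincar\'e is the right skeleton, but the manifold-recognition step you call ``the main obstacle'' is not a removable technicality: it has to be imported either from hypotheses hidden in the definition used by \cite{Lange}, or from the context of the lemma's only application here, where $X=S/G$ is the quotient of a sphere by a \emph{free} action of a compact Lie group and is therefore a genuine closed topological manifold, after which the generalized Poincar\'e conjecture applies as you describe. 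Note also that in that application only the homotopy equivalence $M\simeq S^m$ is used (to force $\pi_4(M)=0$ for $m>4$), so the homeomorphism conclusion\ti and with it the problematic step\ti could be dispensed with altogether.
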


\begin{proof} See Theorem~2.3 and Lemma~2.6 in~\cite[\Ss2]{Lange}.
\end{proof}

Traditionally, denote by~$\T$ the Lie group $\bc{\la\in\Cbb\cln|\la|=1}$ by multiplication.

Suppose that we have a~Euclidian space~$V$ and a~compact group $G\bw\subs\Or(V)$ with Lie algebra $\ggt\subs\sog(V)$. Consider an arbitrary
vector $v\in V$. The subspaces $\ggt v$ and $N_v:=(\ggt v)^{\perp}$ of the space~$V$ are invariant under the stationary subgroup~$G_v$ of the vector~$v$.
The stationary subalgebra~$\ggt_v$ of the vector~$v$ coincides with $\Lie G_v$. Set $M_v:=N_v\cap(N_v^{G_v})^{\perp}\subs N_v$. Clearly,
$N_v=N_v^{G_v}\oplus M_v\bw\subs V$ and $G_vM_v=M_v$.

\begin{stm}\label{Mv} In each $G^0$\dh invariant subspace $V'\subs V$, there exists a~vector~$v$ such that $M_v\subs(V')^{\perp}$.
\end{stm}

\begin{proof} See Statement~2.2 in~\cite[\Ss2]{My1}.
\end{proof}

\begin{theorem}\label{slice} Let $v\in V$ be some vector. If $V/G$ is a~homological manifold, then $N_v/G_v$ and $M_v/G_v$ are homological
manifolds.
\end{theorem}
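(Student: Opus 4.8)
The plan is to reduce the global question to a local one at the image of $0$ by means of the slice theorem, and then to recover globality from the homogeneity that is built into a linear action. First I would apply the slice theorem to the smooth action of the compact group $G\subs\Or(V)$ at the point $v$. Since the tangent space to the orbit $Gv$ at $v$ is $\ggt v$, the normal space is exactly $N_v=(\ggt v)^{\perp}$ and the slice representation is $G_v\cln N_v$; hence a $G$\dh invariant tubular neighborhood of $Gv$ in $V$ is $G$\dh equivariantly homeomorphic to $G\times_{G_v}B$, where $B=\bc{x\in N_v\cln\hnn{x}<\ep}$ is a small ball. Passing to orbit spaces gives $\br{G\times_{G_v}B}/G\cong B/G_v$, so a neighborhood of the point $[v]\in V/G$ is homeomorphic to the neighborhood $\bc{[x]\cln\hnn{x}<\ep}$ of $[0]$ in $N_v/G_v$. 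Because $V/G$ is a homological manifold, it follows that $N_v/G_v$ is a homological manifold at every point of this $\ep$\dh ball about $[0]$.

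Next I would promote this to a statement valid on all of $N_v/G_v$. The scaling maps $x\mapsto tx$ (for $t>0$) on $N_v$ commute with the orthogonal $G_v$\dh action and hence descend to homeomorphisms $\mu_t\cln[x]\mapsto[tx]$ of $N_v/G_v$. Given any $[y]\in N_v/G_v$, choosing $t$ with $t\hnn{y}<\ep$ carries $[y]$ into the $\ep$\dh ball, where $N_v/G_v$ is already known to be a homological manifold; since $\mu_t$ is a homeomorphism and the homological\dh manifold property at a point is a homeomorphism invariant, $N_v/G_v$ is a homological manifold at $[y]$ as well. Equivalently, one may regard $N_v/G_v$ as the cone over $S(N_v)/G_v$ and invoke the cone criterion of Lemma~\ref{prop} at the vertex. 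Either way, $N_v/G_v$ is a homological manifold everywhere.

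Finally I would transfer the conclusion to $M_v/G_v$ using the orthogonal splitting $N_v=N_v^{G_v}\oplus M_v$, on which $G_v$ acts trivially on the first summand and with no nonzero fixed vector on the second. This yields a homeomorphism $N_v/G_v\cong N_v^{G_v}\times\br{M_v/G_v}$ carrying $[0]$ to $(0,[0])$. A standard local\dh homology computation (excision together with the product isomorphism $H_*\br{Y\times\R^k,\,\br{Y\times\R^k}\sm\hc{(y,x)}}\cong H_{*-k}\br{Y,\,Y\sm\hc{y}}$) shows that $Y\times\R^k$ is a homological manifold at $(y,x)$ exactly when $Y$ is at $y$. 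Taking $k=\dim N_v^{G_v}$ and $Y=M_v/G_v$, the fact that $N_v/G_v$ is a homological manifold everywhere gives that $M_v/G_v$ is a homological manifold everywhere, which completes the argument.

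I expect the main obstacle to be precisely this passage from local to global: the slice theorem controls $V/G$, and hence $N_v/G_v$, only in a neighborhood of a single orbit, and the crux is that the linear structure of the slice representation supplies the scaling homotheties (or, equivalently, the cone structure handled by Lemma~\ref{prop}) needed to spread the local information over the whole orbit space, while the product splitting then routes the same conclusion from $N_v/G_v$ to $M_v/G_v$.
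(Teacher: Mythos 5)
Your argument is correct, but it is worth knowing that the paper does not prove this theorem internally at all: its ``proof'' is a citation to Theorem~4 and Corollary~5 of the author's earlier article \cite{homo}, where precisely the slice-theorem mechanism you describe is carried out. So your proposal supplies the content that the paper outsources, and it follows the standard route: the tube $G\times_{G_v}B$ around the orbit identifies an open neighborhood of $[v]$ in $V/G$ with the open set $B/G_v\subs N_v/G_v$, so the homological\dh manifold property holds near $[0]$; the homotheties $x\mapsto tx$ (equivalently, the cone structure of $N_v/G_v$ over $S(N_v)/G_v$ together with Lemma~\ref{prop}) spread it over all of $N_v/G_v$; and the splitting $N_v/G_v\cong N_v^{G_v}\times\br{M_v/G_v}$ transfers it to $M_v/G_v$. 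The only places where a referee would ask you to be explicit are (a) the relative K\"unneth step, where you should note that the couple $\bc{(Y\sm\{y\})\times\R^k,\;Y\times(\R^k\sm\{x\})}$ is excisive and that $H_*\br{\R^k,\R^k\sm\{x\}}$ is free, so no Tor term appears and the isomorphism $H_*\br{Y\times\R^k,(Y\times\R^k)\sm\{(y,x)\}}\cong H_{*-k}\br{Y,Y\sm\{y\}}$ can be read in both directions, and (b) the standing hypotheses on the spaces involved (orbit spaces of compact groups on balls are locally compact, finite-dimensional and locally contractible, so whichever definition of homological manifold one adopts applies). Neither point is a gap; your proof is essentially the one the cited reference gives.
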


\begin{proof} See Theorem~4 and Corollary~5 in~\cite{homo}.
\end{proof}

\begin{df} A~linear operator in a~space over some field is called \textit{a~reflection} (resp. \textit{a~pseudoreflection}) if the subspace of its
stable points has codimension $1$ (resp.~$2$).
\end{df}

\begin{df} Let $K$ be the Lie group $\bc{v\in\Hbb\cln\hn{v}=1}$ by multiplication and $\Ga\subs K$ the inverse image of the dodecahedral rotation group by the covering
homomorphism $K\thra\SO_3$. \textit{The Poincare group} is defined as the linear group obtained by the restriction of the action $K\cln\Hbb$ with left shifts onto the
subgroup $\Ga\subs K$.
\end{df}

\begin{theorem}\label{lang} If the group $G\subs\Or(V)$ is finite and $V/G$ is a~homological manifold, then there are decompositions $G=G_0\times G_1\sti G_k$
and $V=V_0\oplus V_1\sop V_k$ \ter{$k\in\Z_{\ge0}$} such that
\begin{itemize}
\item the subspaces $V_0,V_1\sco V_k\subs V$ are pairwise orthogonal and $G$\dh invariant\~
\item for each $i,j=0\sco k$, the linear group $(G_i)|_{V_j}\subs\Or(V_j)$ is trivial if $i\ne j$, generated by pseudoreflections if $i=j=0$, and isomorphic to
the Poincare group if $i=j>0$ \ter{in part, $\dim V_j=4$ for any $j=1\sco k$}.
\end{itemize}
\end{theorem}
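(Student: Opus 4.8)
The plan is to treat the homological-manifold condition as a purely local property and to run an induction on $\dim V$, feeding the slice Theorem~\ref{slice} into the cone criterion of Lemma~\ref{prop}. First I would split off the fixed part: $V^G$ is a trivial orthogonal summand which I absorb into $V_0$, so I may assume $V^G=\{0\}$. Since $G$ is finite, $\ggt=0$ and $N_v=V$ for every $v$, so $M_v=(V^{G_v})^{\perp}$, and $V/G$ is the cone over the link $S(V)/G$; by Lemma~\ref{prop}\,(2) it is a homological manifold iff $S(V)/G$ is a homology sphere. Before stratifying the link I would rule out reflections: if some $r\in G$ fixed a hyperplane, then at a generic vector $v$ of that hyperplane the slice would be $M_v\cong\R$ with $G_v=\{1,r\}$ acting by $\pm1$, so $M_v/G_v$ is a half-line, which by Theorem~\ref{slice} would have to be a homological $1$-manifold but is not (its endpoint has trivial local homology). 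Hence every codimension-$1$ element is excluded and the only ``wall'' elements are pseudoreflections. I then stratify $S(V)/G$ and apply Theorem~\ref{slice} at its points, where $G_v$ is a proper subgroup and $\dim M_v<\dim V$ (because $v\in V^{G_v}\perp M_v$), which drives the induction.

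The heart is the local classification at a link point. Fixing $v\neq0$ I look at $G_v$ acting on $M_v$ with $M_v^{G_v}=\{0\}$; again $M_v/G_v$ is a cone, a homological manifold iff the sub-link $S(M_v)/G_v$ is a homology sphere. Let $H_v\trianglelefteq G_v$ be generated by all pseudoreflections and split off the subspace they move; on the complementary indecomposable block the remaining group acts with no reflections and no pseudoreflections, hence freely on the corresponding sphere. For a free action the quotient is an honest closed manifold with fundamental group the acting group, so the homology-sphere condition forces that group to be perfect (vanishing $H_1$) and, in the relevant dimensions, to have trivial Schur multiplier, i.e.\ to be superperfect. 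By Lemma~\ref{prop}\,(1) a \emph{simply connected} homology sphere is standard, so a trivial block only gives an ordinary sphere; a nontrivial free block must therefore produce a non-simply-connected homology sphere.

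At this stage I would invoke the classification of finite groups acting freely on spheres (periodic cohomology, Wolf's theorem) together with the fact that the Poincar\'e sphere is the unique nontrivial homology $3$-sphere with finite fundamental group: the only nontrivial superperfect block that can occur is the binary icosahedral group $\Ga\subs K$ acting on $\Hbb\cong\R^4$, i.e.\ exactly the Poincar\'e group, with $S^3/\Ga$ the Poincar\'e sphere. This identifies every non-pseudoreflection indecomposable block as a $4$-dimensional Poincar\'e piece, giving $\dim V_j=4$ for $j\ge1$. Finally I assemble the global decomposition: take $G_0$ to be the normal subgroup generated by all pseudoreflections, $V_0$ the sum of $V^G$ and the subspace it moves, and $V_1\sco V_k$ the Poincar\'e blocks; using Statement~\ref{Mv} to realize each block by a suitable vector and the normality of the pseudoreflection subgroup, I check that the $V_j$ are pairwise orthogonal and $G$-invariant and that $(G_i)|_{V_j}$ behaves as stated, yielding $G=G_0\times G_1\sti G_k$.

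The main obstacle I expect is exactly the identification of the free blocks with $\Ga$: excluding every other perfect (superperfect) finite group and every other dimension needs the full theory of free actions on spheres and the characterization of the Poincar\'e sphere, none of which follows formally from the slice and cone lemmas. A secondary difficulty is the bookkeeping that converts the local block decomposition into a single global orthogonal sum and direct product, since an invariant splitting of $V$ does not by itself split $G$; this has to be forced by the stratification and by the normality of the pseudoreflection subgroup.
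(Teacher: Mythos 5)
You should first be aware that the paper does not prove this theorem at all: its ``proof'' is a one-line citation of Proposition~3.13 in Lange's article \cite{Lange}, so there is no in-paper argument to match. Your sketch does follow the broad strategy of Lange's actual proof (split off the pseudoreflection part, show the remaining factors act freely on the unit sphere, identify them with the binary icosahedral group), but as a proof it has genuine gaps beyond the one you flag yourself.

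The most concrete gap is the step ``no reflections and no pseudoreflections, hence freely on the corresponding sphere.'' This implication is false as stated: an element whose fixed subspace has codimension $3$ or more is neither a reflection nor a pseudoreflection, yet it fixes nonzero vectors of the block and so the action on the sphere is not free. Ruling such elements out is exactly where the induction has to do real work --- at a nonzero fixed vector $w$ one must apply the inductive decomposition to $G_w\cln M_w$ and argue that the resulting Poincar\'e/pseudoreflection structure of the slice is incompatible with the element lying outside the pseudoreflection subgroup; you never close this loop. Second, the identification of the free blocks is not a finishing touch but the mathematical core: you need Smith theory (a finite group acting freely on a sphere has periodic cohomology), the isomorphisms $H_i(S^{m}/G)\cong H_i(G)$ for $i<m$ to force $G$ superperfect with vanishing homology up to degree $m-1$, the Zassenhaus--Suzuki--Milnor classification of superperfect periodic groups to get the binary icosahedral group, and the computation $H_3(\Ga)\cong\Z/120$ to pin the block dimension down to $4$; none of this is derivable from Lemma~\ref{prop} and Theorem~\ref{slice}. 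Third, the global assertion $G=G_0\times G_1\sti G_k$ does not follow from an invariant orthogonal splitting of $V$ together with normality of the pseudoreflection subgroup; one must show that the restriction maps to the blocks separate $G$ as a direct product, which again requires the local (slice) information and is not just ``bookkeeping.'' So the proposal is a plausible roadmap to Lange's theorem, not a proof, whereas the paper simply imports the result.
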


\begin{proof} See Proposition~3.13 in~\cite[\Ss3]{Lange}.
\end{proof}

On the space~$\ggt$, there is an $\Ad(G)$\dh invariant scalar product, according to which, we will further identify the spaces $\ggt$ and~$\ggt^*$. If $\ggt'\subs\ggt$
is a~one-dimensional subalgebra and $V'\subs V$ is a~subspace, then $\ggt'V'\subs V$ is a~subspace of dimension no greater than $\dim V'$.

Recall the definitions of \textit{$q$\dh stable} ($q\in\N$) and \textit{indecomposable} sets of vectors of finite-dimensional spaces over fields~\cite[\Ss1]{My1},
that are also needed in this article.

A~decomposition of a~vector set of a~finite-dimensional linear space onto components will be defined as its representation as the union of its subsets whose
linear spans are linearly independent. If at least two of these linear spans are nontrivial, then such a~decomposition will be called \textit{proper}.
Say that a~vector set is \textit{indecomposable} if it does not admit any proper decomposition onto components. Each vector set can be decomposed onto
indecomposable components uniquely (up to the zero vector distribution), and for any of its decomposition onto components, each component is the union of
some of its indecomposable components (again up to the zero vector).

\begin{df} A~finite vector set of a~finite-dimensional space, with considering multiplicities, will be called \textit{$q$\dh stable} ($q\in\N$), if its
linear span is preserved after deleting of any no greater than $q$ vectors (again taking multiplicities into account).
\end{df}

For an arbitrary finite vector set~$P$ in a~finite-dimensional space over some field, with considering multiplicities, the number of nonzero vectors of the
set~$P$ (again considering multiplicities) will be denoted by~$\hn{P}$.

Assume that the group~$G^0$ is commutative, i.\,e. a~torus.

Each irreducible representation of the group~$G^0$ is one- or two-dimensional. Recall the concept of the weight of its irreducible representation given
in~\cite[\Ss1]{My1}.

An arbitrary two-dimensional irreducible representation of the group~$G^0$ has a~$G^0$\dh invariant complex structure, and we can consider it as
a~one-dimensional complex representation of the group~$G^0$, naturally matching it with a~weight\ti a~Lie group homomorphism $\la\cln G^0\to\T$\ti and identifying
the latter with its differential\ti the vector $\la\in\ggt^*$. Match a~one-dimensional representation of the group~$G^0$ with the weight $\la:=0\in\ggt^*$.

Classes of isomorphic irreducible representations of~$G^0$ are characterized by weights $\la\in\ggt^*=\ggt$ defined up to multiplying by $(-1)$.

Let $P\subs\ggt$ be the set of weights $\la\in\ggt$ corresponding to decomposing the representation $G^0\cln V$ into the direct sum of irreducible ones (with considering
multiplicities). The set $P\subs\ggt$ does not depend on choosing this decomposition (up to multiplying the weights by $(-1)$). Since the representation $G\cln V$
is faithful, we have $\ha{P}=\ggt$.

{\newcommand{\refr}{see \cite[Theorem~4]{homo1}}
\begin{theorem}[\refr]\label{submain} Suppose that $V/G$ is a~homological manifold and $P\subs\ggt$ is a~$2$\dh stable set. Then the representation
$G\cln V$ is the direct product of representations $G_l\cln V_l$ \ter{$l=0\sco p$} such that
\begin{nums}{-1}
\item for any $l=0\sco p$, the quotient space $V_l/G_l$ is a~homological manifold\~
\item $|G_0|<\bes$\~
\item for each $l=1\sco p$, the group~$G_l$ is infinite and the weight set of the representation $G_l\cln V_l$ is indecomposable, $2$\dh stable and does not contain
zeroes.
\end{nums}
\end{theorem}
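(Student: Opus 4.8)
The plan is to read the decomposition off the weight set $P\subseteq\mathfrak{g}$ and then to promote it to a genuine splitting of the whole group with the help of the homological manifold hypothesis. First I would decompose $P$ into its indecomposable components, gathering all zero weights into one part $P_0$ and calling the nonzero components $P_1,\ldots,P_p$. By the very definition of a decomposition into components the real spans $\mathfrak{g}_l:=\langle P_l\rangle$ are linearly independent, and since $\langle P\rangle=\mathfrak{g}$ they form an internal direct sum $\mathfrak{g}=\mathfrak{g}_1\oplus\cdots\oplus\mathfrak{g}_p$. Correspondingly $V=V_0\oplus V_1\oplus\cdots\oplus V_p$, where $V_0=V^{G^0}$ collects the trivial (zero-weight) summands and $V_l$ is the sum of the irreducible $G^0$-summands whose weight lies in $P_l$.

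The torus then splits cleanly, and here I use only faithfulness. Because $G$, hence $G^0$, acts faithfully, the weights generate the full character lattice $\Lambda$ of $G^0$; as each $P_l$ lies in the independent subspace $\mathfrak{g}_l$, the equality $\Lambda=\sum_l\langle P_l\rangle_{\mathbb{Z}}$ is automatically an internal direct sum of lattices $\Lambda=\bigoplus_l\Lambda_l$ with $\Lambda_l:=\langle P_l\rangle_{\mathbb{Z}}$ full in $\mathfrak{g}_l$. Dualizing gives $G^0=T_1\times\cdots\times T_p$ with $\Lie(T_l)=\mathfrak{g}_l$, each $T_l$ acting nontrivially only on $V_l$. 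I would also record at this point that $2$-stability is inherited by every component: deleting two vectors of $P_l$ that drop its span would drop the span of $P$ as well, contradicting $2$-stability of $P$; indecomposability and the absence of zeroes in $P_l$ (for $l\ge1$) hold by construction.

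The hard part is the finite component group $F=G/G^0$. For $g\in G$ the operator $\Ad(g)$ preserves $P$ and the lattice $\Lambda$, hence permutes the canonical components and the blocks $V_l$. I would need two things: that $F$ preserves each infinite block $V_l$ setwise (otherwise the weight set of the corresponding factor would be decomposable, contradicting the conclusion), and that $G$ is the honest direct product of the block subgroups rather than a diagonally entangled subgroup of $\prod_l(G|_{V_l})$. This is where the homological manifold hypothesis enters: using Statement~\ref{Mv} to produce inside each $G^0$-invariant piece a vector $v$ whose normal slice misses the other pieces, and then applying the slice theorem (Theorem~\ref{slice}) together with the finite-group classification (Theorem~\ref{lang}), I would argue that finite isotropy acting across two blocks, or a permutation merging two blocks, forces a local model that is not a homological manifold. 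Ruling out such entanglement lets me absorb the whole finite part together with $V_0$ into a single finite factor $G_0\cln V_0$ and obtain $G=G_0\times G_1\times\cdots\times G_p$, $V=V_0\oplus\cdots\oplus V_p$, with $G_l$ infinite for $l\ge1$. I expect this to be the principal obstacle.

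Finally, with the direct product in hand the quotient factors as $V/G=\prod_{l=0}^p V_l/G_l$, and a local-homology (Künneth) argument transfers the homological manifold property to every factor: the local homology of a product is the tensor product of the local homologies, and this can be concentrated in a single degree with value $\mathbb{Z}$ only if each factor already has the local homology of Euclidean space at every point. Hence each $V_l/G_l$ is a homological manifold, giving conclusion~(1) (including $l=0$); conclusion~(2), that $G_0$ is finite, holds because all torus factors have been exhausted by $G_1,\ldots,G_p$; and conclusion~(3) follows from the properties of the components $P_l$ recorded above.
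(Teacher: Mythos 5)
The paper does not prove Theorem~\ref{submain} itself: it is imported verbatim as Theorem~4 of \cite{homo1}, so the comparison is necessarily between your sketch and that external proof, which occupies a substantial portion of a separate article. Your preparatory work is correct as far as it goes: decomposing $P$ into indecomposable components with the zero weights collected separately, noting that faithfulness makes the weights generate the full character lattice so that the linear independence of the spans forces the lattice, and hence the torus $G^0$, to split as a product of subtori matched with the blocks $V_l$; observing that $2$\dh stability, indecomposability and the absence of zeroes pass to each nonzero component; and, at the end, transferring the homological-manifold property from $V/G=\prod_l V_l/G_l$ to the factors by a local-homology K\"unneth argument. Conclusions (2) and (3) are indeed routine once the product structure of the \emph{group} is in hand.

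The genuine gap is precisely the step you yourself flag as ``the principal obstacle,'' and it is not a peripheral difficulty\ti it is the entire content of the theorem. You must show both that the finite component group $G/G^0$ does not permute the infinite blocks (otherwise no direct factorization with indecomposable weight sets exists) and that the image of $G$ in $\prod_l\Or(V_l)$ is the full product of its block images rather than a proper, diagonally entangled subgroup; only the second half of the hypothesis ($V/G$ a homological manifold) can rule these out, since both phenomena occur for faithful representations with $2$\dh stable weight sets in general. Your proposal replaces this with a statement of intent: ``finite isotropy acting across two blocks, or a permutation merging two blocks, forces a local model that is not a homological manifold.'' No vector $v$ is produced via Statement~\ref{Mv}, no slice representation $G_v\cln M_v$ is computed, and no homological obstruction is identified from Theorem~\ref{slice} or Theorem~\ref{lang}; it is not even clear that the obstruction is local rather than global (note that in the present paper's own application of these tools, in \Ss\,\ref{prove}, the contradiction ultimately comes from $\pi_4$ of the orbit space of the sphere, not from a failure of local homology). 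As it stands, the proposal establishes only the torus-level splitting and the bookkeeping, and cannot be accepted as a proof of the theorem.
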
}

\begin{theorem}\label{main1} Assume that $\dim G=1$ and the set $P\subs\ggt$ is $2$\dh stable and does not contain zeroes. If $V/G$ is a~homological manifold, then
$\hn{P}=3$.
\end{theorem}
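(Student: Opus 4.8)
The plan is to exploit the conical structure of $V/G$ at the origin and reduce the statement to the rational cohomology of the quotient of a sphere. Since $\dim G=1$ and $P$ contains no zeroes, the torus $G^0=\T$ acts on $V$ with the origin as its only fixed point, every irreducible component is two\dh dimensional, and $\dim V=2\hn{P}$. Write $m:=\hn{P}$. The action $G^0\cln V$ has finite generic stabilizer, so $\dim(V/G)=2m-1$. Radial contraction identifies $V\sm\{0\}$ with $S(V)\times(0,\bes)$ $G$\dh equivariantly, whence $V/G$ is the cone over $L:=S(V)/G$. As $V/G$ is a homological manifold, it is one at the image of the origin, so by Lemma~\ref{prop} the space $L$ is a homological $(2m-2)$\dh sphere; in particular $H^*(L;\Q)\cong H^*(S^{2m-2};\Q)$.

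Next I would compute $H^*(L;\Q)$. Put $F:=G/G^0$, a finite group, so that $L=\br{S(V)/G^0}/F$ and hence $H^*(L;\Q)\cong H^*(S(V)/G^0;\Q)^F$ by the transfer. The action of $\T$ on $S(V)=S^{2m-1}$ has only finite stabilizers, since any point has a nonzero coordinate whose weight is nonzero; therefore the Borel construction $S^{2m-1}_{hG^0}\to S^{2m-1}/G^0$ is a rational cohomology isomorphism. From the fibration $S^{2m-1}\to S^{2m-1}_{hG^0}\to BG^0$ and the Gysin sequence, the Euler class being $\br{\prods{i}\la_i}u^m$ with all $\la_i\ne0$, one obtains $H^*(S^{2m-1}/G^0;\Q)\cong\Q[u]/(u^m)$ with $\deg u=2$. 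The group $F$ acts on $G^0=\T$ by conjugation, hence on $\ggt$ by an automorphism, i.\,e. through a homomorphism $\ep\cln F\to\{\pm1\}=\Aut\T$; accordingly $f\in F$ sends $u\mapsto\ep(f)\,u$ and $u^k\mapsto\ep(f)^k u^k$.

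It then remains to read off the invariants. In $\Q[u]/(u^m)$ the classes $u^0,u^1\sco u^{m-1}$ lie in degrees $0,2\sco 2(m-1)$, and $u^k$ is $F$\dh invariant exactly when $\ep(f)^k=1$ for all $f$. If $\ep$ is trivial, every $u^k$ survives, and the condition $H^*(L;\Q)\cong H^*(S^{2m-2};\Q)$ forces the only nonzero degrees to be $0$ and $2m-2$, i.\,e. $m\le2$. If $\ep$ is surjective, precisely the $u^k$ with $k$ even survive; then the top class $u^{m-1}$ must survive, so $m$ is odd, while no $u^k$ with $0<k<m-1$ may survive, which forbids an even $k$ in that range and hence forces $m\le3$. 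In either case $m\le3$. Since $P$ is $2$\dh stable and lies in the one\dh dimensional space $\ggt$, deleting two vectors must leave a nonzero one, so $m\ge3$; therefore $\hn{P}=m=3$.

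The main obstacle is the cohomology computation of the second paragraph: identifying $H^*(S^{2m-1}/G^0;\Q)$ with the truncated polynomial algebra $\Q[u]/(u^m)$ together with the $F$\dh action $u\mapsto\ep(f)u$. Once the ring is seen to be generated in degree~$2$ with single relation $u^m=0$ and $F$ acts through $\Aut\T=\{\pm1\}$, the homological\dh sphere condition becomes the elementary parity bookkeeping above. The delicate geometric input is exactly the finiteness of the stabilizers, the value of the Euler class, and the compatibility of the conjugation action on $\ggt$ with the induced action on the degree\dh two generator~$u$.
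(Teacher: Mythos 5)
Your proof is correct, but it follows a genuinely different route from the paper's. The paper disposes of Theorem~\ref{main1} by citation: since $0\notin P$ the space~$V$ carries a $G^0$\dh invariant complex structure, the case without complex reflections is quoted from Theorem~6 of~\cite{homo1}, and the general case is reduced to it by the weight\dh rescaling procedure of~\cite{My1}. You instead compute the link directly: $V/G$ is the cone over $L=S(V)/G$, so Lemma~\ref{prop} forces $L$ to be a homological $(2m-2)$\dh sphere, and you identify $H^*(L;\Q)$ as the $F$\dh invariants \ter{$F=G/G^0$} of $H^*\br{S(V)/G^0;\Q}\cong\Q[u]/(u^m)$ via the transfer and the Gysin sequence for the Borel construction. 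The two facts you lean on are standard and do hold here: all $\T$\dh stabilizers on $S(V)$ are finite because every weight is nonzero, so the Borel construction is a rational cohomology equivalence of the orbit space; and the map $H^*(B\T;\Q)\to H^*\br{S(V)_{h\T};\Q}$ is surjective, so the $F$\dh action really is $u^k\mapsto\ep(f)^ku^k$ with $\ep\cln F\to\Aut\T=\{\pm1\}$ the conjugation character. The parity bookkeeping then gives $m\le2$ when $\ep$ is trivial and $m\le3$ with $m$ odd when $\ep$ is onto, while $2$\dh stability of a nonzero set spanning a line gives $m\ge3$; the case $m=3$ is genuinely realized (the Arnold--Kuiper--Massey picture $S^5/\T=\CP^2$, $\CP^2/\mathrm{conj}\cong S^4$), so the bound is sharp. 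What your route buys: it is self\dh contained modulo classical equivariant topology (transfer and Gysin, cf.~\cite{Bredon}), it needs only the rational consequence of the integral homology\dh sphere condition, and the upper bound $\hn{P}\le3$ is obtained without invoking $2$\dh stability or any hypothesis on complex reflections. What the paper's route buys is brevity and uniformity with the earlier series \cite{My1,homo,homo1}, whose cited theorem also carries finer structural information about which weight systems actually occur. I see no genuine gap.
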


\begin{proof} Since $0\notin P$, the space~$V$ has a~$G^0$\dh invariant complex structure. If the group $G\subs\Or(V)$ does not contain complex reflections, then
the statement follows from Theorem~6 of the paper~\cite{homo1}. As for the arbitrary case, it can be reduced (see~\cite[\Sss3,\,7]{My1}) to that of a~representation
of a~one-dimensional group without complex reflections whose weight set is obtained form~$P$ with multiplying all weights by nonzero scalars.
\end{proof}

\begin{imp}\label{p3} Suppose that $\dim G=1$ and $P\subs\ggt$ is a~$2$\dh stable set. If $V/G$ is a~homological manifold, then $\hn{P}=3$.
\end{imp}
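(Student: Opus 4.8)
The plan is to peel off the zero weights by means of the product decomposition supplied by Theorem~\ref{submain}, and then to invoke Theorem~\ref{main1} on the remaining zero\dh free part. Since $P\subs\ggt$ is $2$\dh stable and $V/G$ is a~homological manifold, Theorem~\ref{submain} applies and gives a~decomposition $G=G_0\times G_1\sti G_p$, $V=V_0\oplus V_1\sop V_p$, into a~direct product of representations $G_l\cln V_l$ such that each $V_l/G_l$ is a~homological manifold, $|G_0|<\bes$, and, for every $l\ge1$, the group~$G_l$ is infinite with a~$2$\dh stable, zero\dh free weight set.

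Next I would exploit the hypothesis $\dim G=1$. In a~direct product the dimensions add, so $\dim G=\suml{l=0}{p}\dim G_l=1$; here $\dim G_0=0$ because $G_0$ is finite, while $\dim G_l\ge1$ for each $l\ge1$ because $G_l$ is infinite and compact. Hence there is exactly one index $l\ge1$, that is $p=1$ and $\dim G_1=1$, so that $G\cln V$ splits as $\br{G_0\cln V_0}\times\br{G_1\cln V_1}$ with $G_0$ finite and $\dim G_1=1$.

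Then I would identify the weights. As the decomposition is a~direct product, $G^0=G_1^0$ acts trivially on the factor~$V_0$ and as~$G_1^0$ on~$V_1$; therefore the weight set~$P$ of $G^0\cln V$ consists of $\dim V_0$ copies of the zero weight contributed by~$V_0$ together with the weight set of $G_1\cln V_1$, which contains no zeroes. In particular $\hn{P}$ equals the number of weights of $G_1\cln V_1$. Finally, Theorem~\ref{main1} applies verbatim to $G_1\cln V_1$\ti it has $\dim G_1=1$, a~$2$\dh stable zero\dh free weight set, and $V_1/G_1$ a~homological manifold\ti and yields that this weight set has exactly three elements. Consequently $\hn{P}=3$.

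The substantive content here lies entirely in the two cited theorems; the points that I would check most carefully are that the weights are taken with respect to the connected component $G^0=G_1^0$, so that the finite factor~$G_0$ really contributes only zero weights, and that the dimension count genuinely forces a~single one\dh dimensional infinite factor. Granting these, no further computation is needed.
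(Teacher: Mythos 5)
Your proof is correct and follows exactly the route the paper intends: the paper's own proof of this corollary is the one-line citation ``Follows from Theorems~\ref{submain} and~\ref{main1}'', and your argument supplies precisely the routine details behind that citation (the dimension count forcing $p=1$ with $\dim G_1=1$, and the observation that the finite factor contributes only zero weights, which $\hn{P}$ ignores). Nothing further is needed.
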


\begin{proof} Follows from Theorems \ref{submain} and~\ref{main1}.
\end{proof}

\begin{imp}\label{ple3} If $\dim G=1$ and $V/G$ is a~homological manifold, then $\hn{P}\bw\le3$ \ter{or, equivalently, $\dim(\ggt V)\le6$}.
\end{imp}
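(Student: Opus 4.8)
The plan is to reduce the statement to Corollary~\ref{p3} by a dichotomy on the value of $\hn{P}$, the whole point being that $\dim G=1$ makes the $2$-stability hypothesis essentially free. First I would record the key observation. Since $\dim G=1$, the algebra $\ggt$ is one-dimensional, and the faithfulness condition $\ha{P}=\ggt$ means that every single nonzero weight already spans $\ggt$. Consequently, for any sub-multiset of~$P$, its linear span equals $\ggt$ as soon as it still contains at least one nonzero vector. Deleting at most two vectors (with multiplicities) can therefore destroy the span only by removing every nonzero vector, which is possible exactly when $P$ has at most two nonzero vectors. Hence $P$ is $2$-stable if and only if $\hn{P}\ge3$.

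With this equivalence in hand, I would split into two cases. If $\hn{P}\le2$, then the desired bound $\hn{P}\le3$ holds trivially. If instead $\hn{P}\ge3$, then $P$ is $2$-stable by the observation above, so Corollary~\ref{p3} applies and yields $\hn{P}=3$, which again gives $\hn{P}\le3$. In either case the inequality follows, proving the first assertion.

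Finally, I would justify the parenthetical reformulation $\dim(\ggt V)\le6$. Fixing a nonzero element $X\in\ggt$, the subspace $\ggt V$ is the image $XV$ of~$X$ acting on~$V$. On each two-dimensional irreducible component (corresponding to a nonzero weight) $X$ acts invertibly, contributing a two-dimensional summand, while on each trivial one-dimensional component $X$ acts as zero. Thus $\dim(\ggt V)=2\hn{P}$, and the bound $\hn{P}\le3$ is equivalent to $\dim(\ggt V)\le6$.

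There is no genuine obstacle here, since the entire difficulty is already packaged inside Corollary~\ref{p3} (itself resting on Theorems \ref{submain} and~\ref{main1}). The only point that requires care is the equivalence between $2$-stability of~$P$ and the inequality $\hn{P}\ge3$, which depends squarely on the one-dimensionality of~$\ggt$; for higher-dimensional~$\ggt$ this equivalence would fail and the corollary would instead require the full slice-reduction machinery of Statement~\ref{Mv} and Theorem~\ref{slice}.
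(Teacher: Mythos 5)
Your proposal is correct and follows essentially the same route as the paper: the paper argues by contradiction, assuming $\hn{P}>3$, noting that this forces $P$ to be $2$\dh stable (precisely because $\dim\ggt=1$ makes any single remaining nonzero weight span $\ggt$), and then invoking Corollary~\ref{p3} to get $\hn{P}=3$. Your direct case split on $\hn{P}\le2$ versus $\hn{P}\ge3$ and your explicit verification of $\dim(\ggt V)=2\hn{P}$ are just slightly more detailed packagings of the same argument.
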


\begin{proof} Assume that $\hn{P}>3$. Then $P\subs\ggt$ is a~$2$\dh stable set. By Corollary~\ref{p3}, $\hn{P}=3$. So, we came to a~contradiction.
\end{proof}

\section{Proofs of the results}\label{prove}

In this section, Theorem~\ref{main} is proved.

For the rest of the paper, we will assume that $\ggt\cong\sug_2$, i.\,e. that the group~$G^0$ is isomorphic to $\SU_2$ or $\SO_3$.
Set $V_0:=V^{G^0}\subs V$. In terms of~\S\,\ref{introd}, $L-l\bw=\dim V_0$, $V_0\ne V$, and the numbers $n_1\sco n_l$ are the dimensions of irreducible components of the
representation $\ggt\cln V_0^{\perp}$ (with considering multiplicities), each of them being either divisible by~$4$ or odd. If $\ggt'\bw\subs\ggt$ is a~proper subalgebra,
then $\dim\ggt'=1$, and the subspace $\ggt'V\subs V$ has dimension $2q(V)$. Therefore, $2q(V)\le\dim V$.

It is enough to prove the theorem in the case $V_0=0$ (i.\,e. if the representation $G^0\cln V$ does not have one-dimensional irreducible components). Indeed, there
exists a~vector $v\in V_0$ such that $M_v\subs V_0^{\perp}$ (see Statement~\ref{Mv}). We have $\ggt v=0$, $N_v=V$, $G_v\sups G^0$, $(G_v)^0\bw=G^0$,
$\ggt_v=\ggt$. Further, $M_v=(V^{G_v})^{\perp}\bw\sups(V^{G^0})^{\perp}\bw=V_0^{\perp}\sups M_v$, and, hence, $M_v\bw=V_0^{\perp}$. By Theorem~\ref{slice},
if $V/G$ is a~homological manifold, then $M_v/G_v$ is a~homological manifold. As for the decompositions onto irreducible components of the representations
of the group $G^0\bw=(G_v)^0$ in the spaces $V$ and $M_v$, the latter is obtained from the former by deleting all one-dimensional components. Thus, $q(V)\bw=q(M_v)$.

Further, let us assume that $V/G$ is a~homological manifold and $V_0=0$. We should prove that $q(V)\le4$.

Suppose that there exists a~vector $v\in V$ such that $\dim G_v=1$. Then $\dim\ggt_v=1$, $\dim(\ggt v)=2$. Besides, by Theorem~\ref{slice}, $N_v/G_v$ is a~homological
manifold. According to Corollary~\ref{ple3}, $\dim(\ggt_v N_v)\le6$, $2q(V)\bw=\dim(\ggt_v V)\bw\le\dim(\ggt_v N_v)+\dim(\ggt v)\le8$, $q(V)\le4$.

Further, we will assume that the space~$V$ does not contain a~vector with one-dimensional stationary subgroup and that $q(V)>4$. Consequently,
\begin{itemize}
\item $\dim V\ge2q(V)>8$\~
\item $G^0\cong\SU_2$\~
\item any irreducible component of the representation $G^0\cln V$ has dimension divisible by~$4$\~ the same can be said about each of its subrepresentations\~
\item $(\Ker\Ad)\cap G^0=\Zc(G^0)=\{\pm E\}\subs\Or(V)$.
\end{itemize}

Arbitrary operators $g\in G$ and $\xi\in\ggt^{\Ad(g)}$ in the space~$V$ commute. Hence, for any $g\in G$, the subspaces $V^g,(E-g)V\subs V$
are $(\ggt^{\Ad(g)})$\dh invariant\~ once $\Ad(g)=E$, they are $\ggt$\dh invariant and, thus, $\rk(E-g)\dv4$.

Consider an arbitrary vector $v\in V$. If the subalgebra $\ggt_v\subs\ggt$ is proper, then $\dim\ggt_v=1$ that contradicts the assumption. So, $\ggt_v=\ggt$ or $\ggt_v=0$.
In the former case, we have $G_v\sups G^0$, $v\in V^{G^0}=V_0=0$. Therefore, if $v\ne0$, then $\ggt_v=0$, $|G_v|<\bes$, the map $\ggt\to(\ggt v),\,\xi\to(\xi v)$ is a~linear
isomorphism, each $g\in G_v$ and $\xi\in\ggt$ satisfying $g(\xi v)=\br{\Ad(g)\xi}v$ that implies $(\xi v\in V^g)\Lra(\xi\in\ggt^{\Ad(g)})$. Consequently, if $v\ne0$ and
$g\in G_v$, then $(\ggt v)^g=(\ggt^{\Ad(g)})v$.

For arbitrary $g\in G$ and $\xi\in\ggt$, denote by~$\ph_{g,\xi}$ the linear map of the space~$V$ to the outer direct sum of two copies of the space $(E-g)V$ defined by the
rule $v\to\br{(E-g)v,(E-g)\xi v}$.

\begin{lemma}\label{inj} For any $g\in G$ and $\xi\in\ggt\sm(\ggt^{\Ad(g)})$, we have $\Ker\ph_{g,\xi}=0$.
\end{lemma}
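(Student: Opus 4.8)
The plan is to start from an arbitrary vector $v\in\Ker\ph_{g,\xi}$ and show it must vanish. Unwinding the definition of $\ph_{g,\xi}$, membership in the kernel says exactly that the two components $(E-g)v$ and $(E-g)\xi v$ are both zero, i.e. $gv=v$ and $g(\xi v)=\xi v$; equivalently, both $v$ and $\xi v$ lie in the fixed subspace $V^g$.

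The key step is to convert the second relation into an infinitesimal one by means of the identity $\Ad(g)\xi=g\xi g^{-1}$, which is legitimate because $g$ normalizes~$G^0$, so $\Ad(g)$ preserves~$\ggt$. Since $gv=v$, one computes $g\xi v=g\xi g^{-1}(gv)=\br{\Ad(g)\xi}v$, whence the condition $g(\xi v)=\xi v$ turns into $\br{\Ad(g)\xi}v=\xi v$, that is $\eta v=0$ with $\eta:=\Ad(g)\xi-\xi\in\ggt$.

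Now I would invoke the hypothesis $\xi\in\ggt\sm(\ggt^{\Ad(g)})$, which means precisely $\Ad(g)\xi\ne\xi$, hence $\eta\ne0$. The relation $\eta v=0$ then places the nonzero element~$\eta$ in the stationary subalgebra~$\ggt_v$. But it was established just above that $\ggt_v=0$ for every nonzero~$v$ (the alternatives $\dim\ggt_v=1$ and $\ggt_v=\ggt$ being ruled out by the standing assumption on stationary subgroups and by $V_0=0$, respectively). Therefore $v=0$, so $\Ker\ph_{g,\xi}=0$.

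I do not expect a serious obstacle: the whole argument hinges on the single commutation computation $g\xi v=\br{\Ad(g)\xi}v$ on~$V^g$, with the rest following from the dichotomy for~$\ggt_v$ recorded earlier. The only point demanding slight care is confirming $\eta\in\ggt$ so that it is an admissible element of the stationary subalgebra, which is exactly the $\Ad(G)$\dh invariance of~$\ggt$.
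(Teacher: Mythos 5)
Your proof is correct and is essentially the paper's own argument: the paper's proof also reduces membership in $\Ker\ph_{g,\xi}$ to $g\in G_v$ and $\xi v\in V^g$, and then invokes the equivalence $(\xi v\in V^g)\Leftrightarrow(\xi\in\ggt^{\Ad(g)})$ established just before the lemma, which rests on exactly your computation $g(\xi v)=\br{\Ad(g)\xi}v$ together with $\ggt_v=0$ for $v\ne0$. You merely re-derive that equivalence inline instead of citing it, so there is no substantive difference.
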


\begin{proof} If $v\ne0$ and $v\in\Ker\ph_{g,\xi}$, then $(E-g)v=(E-g)\xi v=0$, i.\,e. $g\in G_v$ and $\xi v\in V^g$ following $\xi\in\ggt^{\Ad(g)}$ that contradicts the
condition.
\end{proof}

\begin{imp} If $g\in G$ and $\Ad(g)\ne E$, then $\dim V\le2\cdot\rk(E-g)$.
\end{imp}

Our nearest goal is proving the next theorem.

\begin{theorem}\label{stab} For each $v\in V\sm\{0\}$, we have $[G_v,G_v]=G_v\subs\Ker\Ad$.
\end{theorem}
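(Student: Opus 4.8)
The plan is to fix a nonzero vector $v$, read off the full local structure of the finite stabilizer $G_v$ from the slice theorem together with Lange's classification, and then rule out a nontrivial $\Ad$-image by the rank estimate already available. First I would record the local data. For $v\ne0$ we know $\ggt_v=0$ and $|G_v|<\bes$, that $V=\ggt v\oplus N_v$ with $\dim(\ggt v)=3$, and that every $g\in G_v$ acts on $\ggt v$ through $\Ad(g)$, since $g(\xi v)=\br{\Ad(g)\xi}v$; as $\Aut(\ggt)=\Aut(\sug_2)=\SO_3$, the operator $\Ad(g)$ is a rotation, so $\rk(E-\Ad(g))|_{\ggt v}=2$ whenever $\Ad(g)\ne E$. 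By Theorem~\ref{slice}, $N_v/G_v$ is a homological manifold, and $G_v$ acts faithfully on $N_v$ (if $g\in G_v$ acts trivially on $N_v$ then $\rk(E-g)\le3$; were $\Ad(g)\ne E$ the Corollary to Lemma~\ref{inj} would give $\dim V\le6$, so $\Ad(g)=E$, whence $\rk(E-g)\dv4$ forces $\rk(E-g)=0$, i.e. $g=E$). Hence Theorem~\ref{lang} applies: $G_v=G_0\times G_1\sti G_k$ and $N_v=W_0\oplus W_1\sop W_k$, with $G_0$ generated by pseudoreflections of $W_0$ and each $G_i$ $(i\ge1)$ a Poincare group on the $4$-dimensional $W_i$.

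Next I would eliminate the pseudoreflection part. A pseudoreflection $g\in G_0$ has $\rk(E-g)|_{N_v}=2$, so on $V$ we get $\rk(E-g)=\rk(E-\Ad(g))|_{\ggt v}+2$. If $\Ad(g)=E$ then $g\in\Ker\Ad$, so $\rk(E-g)\dv4$, contradicting $\rk(E-g)=2$; if $\Ad(g)\ne E$ then $\rk(E-g)=2+2=4$, and the Corollary gives $\dim V\le8$, contradicting $\dim V>8$. Thus $G_0$ is trivial and $G_v=G_1\sti G_k$ is a direct product of Poincare groups. Since the Poincare group equals its own commutator subgroup, so does $G_v$, which gives $[G_v,G_v]=G_v$. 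The same structure shows $\Ad(G_v)$ is a perfect finite subgroup of $\SO_3$, hence trivial or the icosahedral group $A_5$.

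It remains to exclude $\Ad(G_v)=A_5$, which yields $G_v\subseteq\Ker\Ad$. Because the factors $G_i$ commute, their $\Ad$-images commute in $\SO_3$; since $A_5$ acts real-irreducibly on $\ggt$, its centralizer in $\SO_3$ is trivial, so at most one factor, say $G_1$, maps nontrivially, and then $\Ad(G_1)=A_5$. Choosing $g\in G_1$ with $\Ad(g)\ne E$, the element $g$ is a nontrivial Poincare rotation of $W_1$, so $\rk(E-g)|_{W_1}=4$, while $g$ fixes every other $W_i$; hence $\rk(E-g)=2+4=6$ and the Corollary gives $\dim V\le12$.

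The hard part is the borderline case $\dim V=12$: here $q(V)=\dim V/2=6$ is consistent with the standing assumption $q(V)>4$, and the bound $\dim V\le2\rk(E-g)$ becomes an equality, so dimension counting alone is inconclusive. In this case the injection $\ph_{g,\xi}$ of Lemma~\ref{inj} is forced to be an isomorphism $V\cong(E-g)V\oplus(E-g)V$ for every admissible $\xi\in\ggt\sm\ggt^{\Ad(g)}$, and I would exploit this rigidity by tracking how $\xi$ moves the large fixed subspace $V^g$ (which meets $W_0$ and the axis line $\R\eta v\subs\ggt v$) into $(E-g)V$, aiming to contradict the product structure of the $G_v$-action; an alternative is to use the divisibility by $4$ of the $\SU_2$-component dimensions together with a slice analysis at the axis vector $\eta v$ fixed by $g$. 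Closing this equality case is the delicate point; once $\Ad(G_v)=A_5$ is excluded, $\Ad(G_v)$ is trivial, so $G_v\subseteq\Ker\Ad$ and the proof is complete.
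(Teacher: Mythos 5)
Your argument tracks the paper's own proof almost exactly through its first two stages: the slice theorem plus Lange's classification give the decomposition $G_v=G_0\times G_1\sti G_k$ acting on $N_v=W_0\oplus W_1\sop W_k$, the pseudoreflection factor $G_0$ is killed by the rank count ($\rk(E-g)=2$ or $4$ is impossible when $\dim V>8$ and $\rk(E-g)\dv4$ for $g\in\Ker\Ad$), and $G_v=G_1\sti G_k$ is perfect, giving $[G_v,G_v]=G_v$. All of that is correct and is what the paper does. But the final step\ti showing $\Ad(G_v)=\{E\}$\ti is left genuinely open. You correctly reduce to the case where a single factor $G_1$ carries $\Ad(G_v)\cong A_5$, get $\rk(E-g)=2+4=6$ for $g\in G_1$ with $\Ad(g)\ne E$, and deduce $\dim V\le12$ from the corollary to Lemma~\ref{inj}; since every irreducible component has dimension divisible by $4$, only $\dim V=12$ survives, and you explicitly concede that closing this case is ``the delicate point,'' offering only unexecuted sketches. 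So the theorem is not proved.

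The paper closes exactly this gap by a sharper use of Lemma~\ref{inj}: instead of applying the injectivity of $\ph_{g,\xi}$ to all of $V$ (which only yields $\dim V\le2\cdot\rk(E-g)=12$), it applies it to the fixed subspace $V^h$ of a \emph{second} element $h\in G_i$ chosen so that $\Ad(g)$ and $\Ad(h)$ do not commute, taking $\xi$ to span the line $\ggt^{\Ad(h)}$ (so $\xi\notin\ggt^{\Ad(g)}$ and $\xi V^h\subs V^h$). Then $V^h=(N_v\cap W_i^{\perp})\oplus\R(\xi v)$, and since $g$ fixes $N_v\cap W_i^{\perp}$ pointwise as well, both coordinates of $\ph_{g,\xi}$ send $V^h$ into the line $\R\br{(E-g)\xi v}$; injectivity then forces $\dim V^h\le2$, whence $\dim V=\rk(E-h)+\dim V^h\le6+2=8$, contradicting $\dim V>8$. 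This proves each $\Ad(G_i)$ is commutative, hence trivial by perfectness\ti precisely the conclusion your $A_5$ reduction still needs. The rigidity of $\ph_{g,\xi}$ on all of $V$ alone cannot decide the case $\dim V=12$; restricting it to $V^h$ for a non-commuting pair is the missing ingredient.
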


For proving Theorem~\ref{stab}, fix an arbitrary vector $v\bw\in V\sm\{0\}$.

Denote by~$\pi$ the homomorphism $G_v\to\Or(N_v),\,g\to g|_{N_v}$ and by~$H_v$ the subgroup $\pi(G_v)\subs\Or(N_v)$. By Theorem~\ref{slice}, $N_v/G_v$ is a~homological
manifold. Besides, $|G_v|<\bes$. According to Theorem~\ref{lang}, there are decompositions $H_v\bw=H_0\times H_1\sti H_k$ and $N_v=W_0\oplus W_1\sop W_k$ \ter{$k\in\Z_{\ge0}$}
such that
\begin{itemize}
\item the subspaces $W_0,W_1\sco W_k\subs N_v$ are pairwise orthogonal and $G_v$\dh invariant\~
\item for each $i,j=0\sco k$, the linear group $(H_i)|_{W_j}\subs\Or(W_j)$ is trivial if $i\ne j$, generated by pseudoreflections if $i=j=0$, and isomorphic to the
Poincare group if $i=j>0$ \ter{in part, $\dim W_j=4$ for any $j=1\sco k$}.
\end{itemize}
It is well known that the Poincare group coincides with its commutant\~ the same can be said about each of the groups $H_i$, $i=1\sco k$.

If $g\in G_v$, then $\rk(E-g)-\dim\br{(E-g)N_v}=\dim\br{(E-g)(\ggt v)}\bw=\rk\br{E\bw-\Ad(g)}\bw\le2$\~ once $\Ad(g)=E$, we have $\rk(E-g)=\dim\br{(E-g)N_v}$.

\begin{lemma}\label{triv} If $g\in G_v$ and $\dim\br{(E-g)N_v}\le2$, then $g=E$.
\end{lemma}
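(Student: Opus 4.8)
The plan is to settle the lemma by a short dichotomy on whether $\Ad(g)$ is trivial, using the rank identity displayed immediately before the statement. Fix $g\in G_v$ with $\dim\br{(E-g)N_v}\le2$. That identity reads $\rk(E-g)=\dim\br{(E-g)N_v}+\rk\br{E-\Ad(g)}$, and since $\rk\br{E-\Ad(g)}\le2$ in all cases, I obtain at once the crude bound $\rk(E-g)\le4$.

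First I would rule out $\Ad(g)\ne E$. In that case the corollary to Lemma~\ref{inj} gives $\dim V\le2\cdot\rk(E-g)\le8$, which contradicts the standing inequality $\dim V>8$. Hence this situation is impossible and necessarily $\Ad(g)=E$.

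It then remains to deduce $g=E$ from $\Ad(g)=E$. Now $\rk\br{E-\Ad(g)}=0$, so the identity collapses to $\rk(E-g)=\dim\br{(E-g)N_v}\le2$. On the other hand, when $\Ad(g)=E$ the subspace $(E-g)V\subs V$ is $\ggt$-invariant, hence a subrepresentation of the representation $G^0\cln V$, and so its dimension is divisible by~$4$; that is, $\rk(E-g)\dv4$. A nonnegative multiple of~$4$ not exceeding~$2$ must be zero, so $\rk(E-g)=0$ and $g=E$.

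I do not expect a genuine obstacle here: the whole proof is the collision of two constraints. The assumption $\dim V>8$ (valid because $q(V)>4$ and $G^0\cong\SU_2$) eliminates the nontrivial-$\Ad$ branch as soon as the rank is small, while the divisibility $\rk(E-g)\dv4$ is exactly what promotes the soft bound $\le2$ into the sharp conclusion $g=E$. The only point requiring care is to confirm that both ingredients\ti the rank identity and the $4$-divisibility\ti genuinely apply to the given $g$, which they do, since $G_v\subs G$ and $(E-g)V$ is $\ggt$-invariant precisely in the case $\Ad(g)=E$ that we are using it.
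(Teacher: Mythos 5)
Your proof is correct and follows essentially the same route as the paper: the rank identity yields $\rk(E-g)\le4$, the corollary to Lemma~\ref{inj} together with $\dim V>8$ forces $\Ad(g)=E$, and then the divisibility $\rk(E-g)\dv4$ combined with $\rk(E-g)=\dim\br{(E-g)N_v}\le2$ gives $g=E$. No gaps.
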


\begin{proof} By condition, $\rk(E-g)\le4$. If $\Ad(g)\ne E$, then $\dim V\bw\le2\bw\cdot\rk(E\bw-g)\le8$ while $\dim V>8$. Hence, $\Ad(g)=E$ implying, firstly,
$\rk(E-g)\dv4$ and, secondly, $\rk(E-g)=\dim\br{(E-g)N_v}\le2$. Thus, $\rk(E-g)=0$, $g=E$.
\end{proof}

According to Lemma~\ref{triv}, $\Ker\pi=\{E\}\subs G_v$, i.\,e. $\pi$ is an isomorphism $G_v\bw\to H_v$. Therefore, setting $G_i:=\pi^{-1}(H_i)\subs G_v$
\ter{$i=0\sco k$}, we obtain that
\begin{itemize}
\item $G_v=G_0\times G_1\sti G_k$\~
\item the group~$G_0$ is generated by the elements $g\in G_v$ such that $\dim\br{(E\bw-g)N_v}\le2$ (and, by Lemma~\ref{triv}, is trivial)\~
\item each of the groups $G_i$, $i=1\sco k$, coincides with its commutant\~
\item if $i\in\{1\sco k\}$ and $g\in G_i\sm\{E\}$, then $N_v^g=N_v\cap W_i^{\perp}$ and $(E-g)N_v=W_i$ (consequently, $\dim\br{(E-g)N_v}=4$, $\rk(E-g)\le6$).
\end{itemize}
It follows from said above that $G_v=G_1\sti G_k=[G_v,G_v]$.

\begin{lemma} Each of the groups $\Ad(G_i)$, $i=1\sco k$, is commutative.
\end{lemma}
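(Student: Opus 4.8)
The plan is to pin down the image $\Ad(G_i)$ completely. Since each $G_i$ coincides with its commutant, i.e.\ $G_i=[G_i,G_i]$, applying $\Ad$ gives $\Ad(G_i)=[\Ad(G_i),\Ad(G_i)]$, so $\Ad(G_i)$ is a \emph{perfect} finite subgroup of the rotation group $\SO(\ggt)\cong\SO_3$. By the classification of finite subgroups of $\SO_3$ \ter{cyclic, dihedral, $A_4$, $S_4$, $A_5$}, the only perfect ones are the trivial group and $A_5$; of these, only $A_5$ is non-commutative. Hence proving the lemma is equivalent to excluding the case $\Ad(G_i)\cong A_5$ and thereby forcing $\Ad(G_i)=\{E\}$.

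First I would rule out $A_5$ on dimension grounds as far as possible. Assume $\Ad(G_i)\cong A_5$ and pick $g\in G_i$ with $\Ad(g)\ne E$. Then $\Ad(g)$ is a nontrivial rotation, so $\rk\br{E-\Ad(g)}=2$; since $g\in G_i\sm\{E\}$ we have $\dim\br{(E-g)N_v}=4$, whence by the identity $\rk(E-g)=\dim\br{(E-g)N_v}+\rk\br{E-\Ad(g)}$ we get $\rk(E-g)=6$. The corollary to Lemma~\ref{inj} then gives $\dim V\le2\rk(E-g)=12$. Combined with $\dim V\dv4$ and $\dim V>8$ this forces $\dim V=12$, and consequently $\dim N_v=9$, $\dim W_0=9-4k$, and $k\le2$.

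The hard part is to turn this rigid situation into an actual contradiction. My approach would be to exploit the slice $N_v/G_v\cong W_0\times(M_v/G_v)$ coming from Theorem~\ref{slice} together with the $\Ad$\dh action and the standing hypothesis that no vector of~$V$ has a one-dimensional stationary subgroup: choosing $\xi$ in the two-dimensional subspace $\br{E-\Ad(g)}\ggt$ and applying the injectivity of $\ph_{g,\xi}$ from Lemma~\ref{inj} to the $9$\dh dimensional space $N_v$, I would try to squeeze the image into a subspace of dimension at most~$8$, contradicting $\dim V=12$. Making this squeezing precise\ti that is, showing that the second coordinate $(E-g)\xi v'$ is already determined by the first on a subspace of codimension at most~$2$\ti is the main obstacle; it is here that the quaternionic nature of the components \ter{each of dimension divisible by~$4$} and the equality $(E-g)N_v=W_i$ for every nontrivial $g\in G_i$ must be brought to bear. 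Once $A_5$ is excluded we obtain $\Ad(G_i)=\{E\}$, which is in particular commutative and, through $G_i=[G_i,G_i]$, directly advances the inclusion $G_v\subs\Ker\Ad$ of Theorem~\ref{stab}.
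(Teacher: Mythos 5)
You correctly observe that $G_i=[G_i,G_i]$ forces $\Ad(G_i)$ to be a perfect finite subgroup of $\SO_3$, hence trivial or isomorphic to $A_5$, and your computation $\rk(E-g)=6$ and $\dim V\le12$ for $g\in G_i$ with $\Ad(g)\ne E$ is valid. But the proof stops exactly where the real work begins: the step you yourself call the main obstacle is the entire content of the lemma. A single element $g$ cannot yield the contradiction\ti the map $\ph_{g,\xi}$ lands in a $12$\dh dimensional space, and nothing in your setup forces its image on $N_v$ (or on $V$) into anything smaller, so the bound $\dim V\le12$ is where a one-element argument ends.

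The missing idea is to use a \emph{pair} of elements whose $\Ad$\dh images fail to commute, and to restrict $\ph_{g,\xi}$ not to $N_v$ but to the fixed subspace of the second element. If $\Ad(g)$ and $\Ad(h)$ do not commute \ter{$g,h\in G_i$}, then $\ggt^{\Ad(g)}$ and $\ggt^{\Ad(h)}$ are distinct lines in $\ggt$, so one may choose $\xi$ spanning $\ggt^{\Ad(h)}$ with $\xi\notin\ggt^{\Ad(g)}$. Since $\xi$ and $h$ commute, $V^h$ is $\xi$\dh invariant, and $V^h=N_v^h\oplus\R(\xi v)$ because $(\ggt v)^h=(\ggt^{\Ad(h)})v=\R(\xi v)$. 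The decisive point, which has no analogue in a single-element setup, is that $N_v^g=N_v^h=N_v\cap W_i^{\perp}$ for \emph{any} two nontrivial elements of $G_i$; hence $E-g$ annihilates $N_v^h$, so $(E-g)V^h=\R\br{(E-g)\xi v}$ and, since $\xi V^h\subs V^h$, also $(E-g)\xi V^h\subs(E-g)V^h$. Therefore $\dim(\ph_{g,\xi}V^h)\le2$, and injectivity of $\ph_{g,\xi}$ \ter{Lemma~\ref{inj}, applicable precisely because $\xi\notin\ggt^{\Ad(g)}$} gives $\dim V^h\le2$, whence $\dim V=\rk(E-h)+\dim V^h\le6+2=8$, contradicting $\dim V>8$. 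Without this two-element interplay the argument does not close; with it, the detour through the classification of finite subgroups of $\SO_3$ is unnecessary.
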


\begin{proof} Suppose that there exist a~number $i\in\{1\sco k\}$ and elements $g,h\in G_i$ such that the operators $\Ad(g)$ and $\Ad(h)$ do not commute.

We have $\Ad(g),\Ad(h)\ne E$, the subspaces $\ggt^{\Ad(g)},\ggt^{\Ad(h)}\subs\ggt$ being different and one-dimensional. Hence, $\ggt^{\Ad(h)}=\R\xi$
($\xi\in(\ggt^{\Ad(h)})\sm(\ggt^{\Ad(g)})$), thus, $\xi V^h\bw\subs V^h$ and, also, $(\ggt v)^h=(\ggt^{\Ad(h)})v=\R(\xi v)$. Further, $g,h\in G_i\sm\{E\}$
following, firstly, $\rk(E-h)\le6$ and, secondly, $N_v^g=N_v^h=N_v\cap W_i^{\perp}$, $V^h=N_v^g\oplus\br{\R(\xi v)}$,
$(E-g)\xi V^h\subs(E-g)V^h=\R\br{(E-g)\xi v}$, $\dim(\ph_{g,\xi}V^h)\le2$. According to Lemma~\ref{inj}, $\Ker\ph_{g,\xi}=0$ implying $\dim V^h=\dim(\ph_{g,\xi}V^h)\le2$ and,
consequently, $\dim V=\rk(E-h)+\dim V^h\le8$ while $\dim V>8$. The contradiction obtained completes the proof.
\end{proof}

For each $i=1\sco k$, we have $\Ad(G_i)\bw=\Ad\br{[G_i,G_i]}\bw=\bs{\Ad(G_i),\Ad(G_i)}\bw=\{E\}$, i.\,e. $G_i\subs\Ker\Ad$. Therefore, $G_v=G_1\sti G_k\subs\Ker\Ad$.

So, we completely proved Theorem~\ref{stab}.

\begin{imp}\label{strv} For any $v\in V\sm\{0\}$, we have $G_v\cap G^0=\{E\}$.
\end{imp}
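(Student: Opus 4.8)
The plan is to deduce the corollary immediately from Theorem~\ref{stab} together with the identity $(\Ker\Ad)\cap G^0=\Zc(G^0)=\{\pm E\}$ recorded in the bullet list before Lemma~\ref{inj}. First I would fix a~nonzero vector $v\in V$ and take an arbitrary element $g\in G_v\cap G^0$. By Theorem~\ref{stab} the whole stationary subgroup satisfies $G_v\subs\Ker\Ad$, so $g$ lies in $(\Ker\Ad)\cap G^0=\{\pm E\}$\~ hence $g=E$ or $g=-E$.

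It then remains only to exclude the possibility $g=-E$. Since $g\in G_v$ fixes the vector~$v$, the equality $g=-E$ would give $-v=gv=v$, whence $v=0$, contradicting $v\ne0$. Therefore $g=E$, so that $G_v\cap G^0\subs\{E\}$\~ the reverse inclusion being obvious, we conclude $G_v\cap G^0=\{E\}$. I expect no real obstacle here: all the substance has already been produced in Theorem~\ref{stab}, and the single point worth keeping in mind is that the scalar operator $-E$ cannot fix a~nonzero vector, which is exactly what forces $g=E$ rather than $g=-E$.
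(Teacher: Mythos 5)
Your proof is correct and follows exactly the paper's route: apply Theorem~\ref{stab} to get $G_v\subs\Ker\Ad$, intersect with $(\Ker\Ad)\cap G^0=\{\pm E\}$, and rule out $-E$ because it fixes no nonzero vector. The paper leaves that last exclusion implicit; you have merely spelled it out.
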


\begin{proof} By Theorem~\ref{stab}, $G_v\subs\Ker\Ad$, $G_v\cap G^0\subs(\Ker\Ad)\cap G^0=\{\pm E\}\bw\subs\Or(V)$.
\end{proof}

There exists an embedding $\T\hra G^0$, so, the group~$\T$ can be identified with its image by this embedding and considered as a~subgroup of the group~$G^0$.
According to Corollary~\ref{strv}, each irreducible subrepresentation of the representation $\T\cln V$ is faithful and, hence, isomorphic to the representation
$\T\cln\Cbb$ by multiplying. Thus, the space~$V$ is supplied with a~complex structure in whose terms the action $\T\cln V$ is proceeded with
multiplying by scalars. All operators of the group $\Ker\Ad$ commute with all operators of the group $G^0\sups\T$, and, also, $(\Ker\Ad)\cap G^0=\{\pm1\}\subs\T$.
Therefore, $\Ker\Ad$ is a~finite subgroup of the group $\GL_{\Cbb}(V)$, each of its operators~$g$ is semisimple over the field~$\Cbb$ and satisfies the relation
$(\Spec_{\Cbb}g)\subs\T\subs G^0$.

In the group~$G$, denote by~$H$ the subgroup generated by the union of all subgroups $G_v$, $v\in V\sm\{0\}$. By Theorem~\ref{stab}, $H\subs\Ker\Ad$.

\begin{prop}\label{gla} For any $g\in\Ker\Ad$, we have $(\Spec_{\Cbb}g)\subs(gH)\cap\T$.
\end{prop}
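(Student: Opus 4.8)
The plan is to show, for each eigenvalue $\la\in\Spec_\Cbb g$, that $\la$\ti viewed as the scalar multiplication operator, which lies in $\T\subs G^0$\ti belongs to the coset $gH$; since automatically $\la\in\T$, this will yield $\la\in(gH)\cap\T$, and letting $\la$ range over the whole spectrum gives the inclusion $(\Spec_\Cbb g)\subs(gH)\cap\T$.

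First I would record why $g$ has complex eigenvectors at all. The operator $g\in\Ker\Ad$ commutes with every element of $G^0\sups\T$, in particular with the scalar multiplication by~$\ib\in\T$ that defines the complex structure of~$V$; hence $g$ is $\Cbb$\dh linear. As already noted before the statement, each operator of $\Ker\Ad$ is semisimple over~$\Cbb$ with spectrum in~$\T$, so for any $\la\in\Spec_\Cbb g$ there is a nonzero vector $v\in V$ with $gv=\la v$.

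The key step is then the coset identity. Since $|\la|=1$, the scalar $\la^{-1}\in\T\subs G^0\subs G$ acts as multiplication by~$\la^{-1}$, and the element $\la^{-1}g\in G$ satisfies $(\la^{-1}g)v=\la^{-1}(gv)=v$, so $\la^{-1}g\in G_v$. Because $v\ne0$, the subgroup~$G_v$ is one of the subgroups generating~$H$, whence $\la^{-1}g\in G_v\subs H$. Passing to the inverse inside the subgroup~$H$ gives $g^{-1}\la=(\la^{-1}g)^{-1}\in H$, that is, $\la\in gH$. Together with $\la\in\T$ this proves $\la\in(gH)\cap\T$.

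I do not expect a serious obstacle here: the proof is essentially a direct translation of the eigenvector relation $gv=\la v$ into the coset statement. The only points that need care are that $g$ is genuinely $\Cbb$\dh linear\ti so that $\Spec_\Cbb g$ and a complex eigenvector make sense\ti and that the eigenvector~$v$ is a nonzero vector of~$V$, so that $G_v$ is defined and contained in~$H$; both are guaranteed by the facts assembled immediately before the statement, so the argument reduces to the short chain above.
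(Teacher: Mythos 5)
Your proof is correct and is essentially the paper's own argument: the paper likewise takes an eigenvector $v\ne0$ with $gv=\la v$ and concludes $\la\in gG_v\subs gH$, relying on the facts about $\Ker\Ad$ (complex linearity, semisimplicity, spectrum in~$\T$) established just before the statement. You merely spell out the intermediate step $\la^{-1}g\in G_v$ explicitly.
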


\begin{proof} Take an arbitrary element $\la\in(\Spec_{\Cbb}g)$. There exists a~vector $v\bw\in V\sm\{0\}$ such that $gv=\la v$, so, $\la\in gG_v\subs gH$.
\end{proof}

\begin{lemma} For each $g\in\Ker\Ad$, we have $g^2=E$.
\end{lemma}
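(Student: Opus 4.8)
The plan is to combine the semisimplicity of $g$ over~$\Cbb$ with Proposition~\ref{gla}. It was already shown that every operator of $\Ker\Ad$ is semisimple over~$\Cbb$, so $g$ is diagonalizable, and it therefore suffices to prove that each eigenvalue $\la\in\Spec_{\Cbb}g$ equals $\pm1$; the relation $g^2=E$ then follows immediately.

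First I would apply Proposition~\ref{gla}, which gives $\la\in(gH)\cap\T$ for every $\la\in\Spec_{\Cbb}g$. As throughout, I read~$\la$ as the scalar operator of multiplication by~$\la$ in the complex structure fixed above, that is, as an element of $\T\subs G^0\subs G$. The whole task is then to trap the coset $gH$ inside a group small enough to force $\la=\pm1$.

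For that I would invoke Theorem~\ref{stab}, which yields $H\subs\Ker\Ad$. Since $\Ker\Ad$ is a subgroup of~$G$ and $g\in\Ker\Ad$, the entire coset satisfies $gH\subs\Ker\Ad$. On the other hand $\T\subs G^0$, whence $(gH)\cap\T\subs(\Ker\Ad)\cap G^0=\{\pm E\}=\{\pm1\}$. Thus $\la\in\{\pm1\}$ for every $\la\in\Spec_{\Cbb}g$, and the semisimplicity of~$g$ gives $g^2=E$.

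There is no genuinely hard step remaining: the substantive work was done earlier, in Theorem~\ref{stab} (the inclusion $H\subs\Ker\Ad$) and in the paragraph establishing that each element of $\Ker\Ad$ is semisimple over~$\Cbb$ with spectrum in~$\T$. The only point requiring care is the bookkeeping, namely the consistent identification of each eigenvalue with the corresponding scalar operator of~$\T\subs G^0$, so that the membership $\la\in(\Ker\Ad)\cap G^0$ is legitimate and the equality $\{\pm E\}=\{\pm1\}$ may be applied.
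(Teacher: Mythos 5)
Your proof is correct and follows essentially the same route as the paper's: combine $H\subs\Ker\Ad$ (from Theorem~\ref{stab}) with Proposition~\ref{gla} to get $\Spec_{\Cbb}g\subs(gH)\cap\T\subs(\Ker\Ad)\cap\T=\{\pm1\}$, then conclude by semisimplicity. The only cosmetic difference is that you pass through $(\Ker\Ad)\cap G^0$ rather than $(\Ker\Ad)\cap\T$ directly, which changes nothing.
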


\begin{proof} As said above, $H\subs\Ker\Ad$. So, $gH\subs g(\Ker\Ad)=\Ker\Ad$. By Proposition~\ref{gla},
$(\Spec_{\Cbb}g)\subs(gH)\cap\T\subs(\Ker\Ad)\cap\T=\{\pm1\}\subs\T$.
\end{proof}

\begin{imp}\label{coker} The group $\Ker\Ad$ is commutative.
\end{imp}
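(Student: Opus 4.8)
The plan is to derive this immediately from the preceding Lemma, which asserts that $g^2 = E$ for every $g \in \Ker\Ad$. This relation says precisely that the group $\Ker\Ad$ has exponent dividing~$2$, so that every element is its own inverse. The only remaining work is the classical observation that any group of exponent~$2$ is commutative, and I would invoke exactly this.

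Concretely, I would take arbitrary elements $g, h \in \Ker\Ad$ and apply the Lemma to their product, obtaining $(gh)^2 = E$, i.\,e. $ghgh = E$. Reading this as $gh = (gh)^{-1} = h^{-1}g^{-1}$ and then substituting $g^{-1} = g$ and $h^{-1} = h$ (which hold again by the Lemma, since $g^2 = h^2 = E$), I get $gh = hg$. As $g$ and $h$ were arbitrary, this shows that $\Ker\Ad$ is commutative, which is the assertion.

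I expect no genuine obstacle at this stage. All the substantive difficulty was already absorbed into proving that every element of $\Ker\Ad$ is an involution, which itself rested on Theorem~\ref{stab}, Proposition~\ref{gla}, and the identification of the complex structure under the torus action $\T\cln V$. Relative to that groundwork, the corollary is a purely formal, one-line consequence of elementary group theory, and the proof can be stated in a single sentence.
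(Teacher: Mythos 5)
Your proof is correct and matches the paper's intent exactly: the paper states this corollary without proof, leaving it as the standard consequence of the preceding lemma that every element of $\Ker\Ad$ is an involution, and your argument \ter{$(gh)^2=E$ together with $g^2=h^2=E$ gives $gh=h^{-1}g^{-1}=hg$} is precisely that one-line deduction. No gaps.
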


\begin{imp}\label{stri} For any $v\in V\sm\{0\}$, we have $G_v=\{E\}$.
\end{imp}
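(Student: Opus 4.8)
The plan is to read off the conclusion from the two structural facts about stationary subgroups that have just been established, namely the perfectness statement in Theorem~\ref{stab} and the commutativity statement in Corollary~\ref{coker}. The crucial point is that these two facts pull $G_v$ in opposite directions: Theorem~\ref{stab} forces $G_v$ to equal its own commutator, while Corollary~\ref{coker} forces $G_v$ to be abelian, and the only group satisfying both demands is the trivial one.

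Concretely, I would first fix $v\in V\sm\{0\}$ and invoke Theorem~\ref{stab} to record the inclusion $G_v\subs\Ker\Ad$ together with the identity $[G_v,G_v]=G_v$. Next I would apply Corollary~\ref{coker}, which says that $\Ker\Ad$ is commutative; since $G_v$ is a subgroup of $\Ker\Ad$, it follows that $G_v$ is commutative as well. For any commutative group the commutator subgroup is trivial, so $[G_v,G_v]=\{E\}$. Combining this with the equality $G_v=[G_v,G_v]$ from Theorem~\ref{stab} yields $G_v=\{E\}$, which is exactly the assertion.

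I do not expect a genuine obstacle here, since all the substantive work has already been carried out upstream: the perfectness of $G_v$ is the content of Theorem~\ref{stab}, and the commutativity of $\Ker\Ad$ was obtained from the lemma stating that $g^2=E$ for every $g\in\Ker\Ad$. The sole step that might look like a trick is the simultaneous use of both properties of $G_v$, but it is purely formal: an abelian perfect group is trivial. Thus the corollary follows in a single line from the two results immediately preceding it, and the proof amounts to no more than assembling these citations in the correct order.
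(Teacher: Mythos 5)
Your proof is correct and is exactly the argument the paper intends: its own proof consists solely of the citation ``Follows from Theorem~\ref{stab} and Corollary~\ref{coker}'', and the content you supply (a perfect subgroup of a commutative group is trivial) is the unique way to combine those two results. No differences to report.
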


\begin{proof} Follows from Theorem~\ref{stab} and Corollary~\ref{coker}.
\end{proof}

\begin{imp}\label{htri} The subgroup $H\subs G$ is trivial.
\end{imp}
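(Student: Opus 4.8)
The plan is to read this off directly from the chain of results just established. Recall that $H\subs G$ was defined as the subgroup generated by the union of all stationary subgroups $G_v$ with $v\in V\sm\{0\}$. The entire substance of the argument has already been carried out upstream: Theorem~\ref{stab} places every such $G_v$ inside $\Ker\Ad$ and shows $G_v=[G_v,G_v]$, Corollary~\ref{coker} establishes that $\Ker\Ad$ is commutative, and Corollary~\ref{stri} then combines these to force $G_v=\{E\}$ for each nonzero~$v$.

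Given this, the proof I would write is a one-line deduction. Since $H$ is by definition generated by $\cups{v\in V\sm\{0\}}G_v$, and by Corollary~\ref{stri} every one of these generating subgroups equals $\{E\}$, the full generating set collapses to the identity. A subgroup generated by the trivial set is trivial, so $H=\{E\}$.

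There is no genuine obstacle here: all the difficulty was absorbed into proving Theorem~\ref{stab} and then propagating it through Corollaries~\ref{strv}, \ref{coker}, and~\ref{stri}. The key nontrivial input\ti that a nonzero vector cannot have a nontrivial stabilizer\ti is exactly the content of Corollary~\ref{stri}, and once that is in hand this corollary is purely formal. In short, I would simply cite Corollary~\ref{stri} and the definition of~$H$, with nothing further required.
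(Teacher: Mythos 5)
Your proof is correct and is exactly the paper's intended argument: the paper states this corollary without proof immediately after Corollary~\ref{stri}, since $H$ is generated by the subgroups $G_v$, each of which is trivial. Nothing further is needed.
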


\begin{lemma}\label{ker0} We have $\Ker\Ad\subs G^0$.
\end{lemma}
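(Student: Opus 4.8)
The plan is to prove the sharper fact that $\Ker\Ad=\{\pm E\}$, which is contained in $\Zc(G^0)\subs G^0$; all the hard work has already been arranged by the preceding corollaries, so the statement reduces to a short eigenspace argument. The two key inputs are the immediately preceding lemma, which gives $g^2=E$ for every $g\in\Ker\Ad$, and Corollary~\ref{stri}, which says that the stabilizer $G_v$ is trivial for every nonzero vector $v\in V$.

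First I would fix an arbitrary $g\in\Ker\Ad$. Since $g\in\Or(V)$ and $g^2=E$, the operator $g$ is an orthogonal involution, so $V$ splits as the orthogonal direct sum $V=\Ker(E-g)\oplus\Ker(E+g)$ of its $(+1)$- and $(-1)$-eigenspaces. Then I would rule out a nonzero fixed subspace: if $\Ker(E-g)\ne0$, I would choose a nonzero vector $v$ in it; from $gv=v$ we get $g\in G_v$, and Corollary~\ref{stri} forces $G_v=\{E\}$, hence $g=E$. If instead $\Ker(E-g)=0$, then $g$ acts as $-E$ on all of~$V$. In either case $g\in\{\pm E\}=\Zc(G^0)\subs G^0$, which yields $\Ker\Ad\subs G^0$.

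The same conclusion can be reached through the complex structure recorded before the statement: each $g\in\Ker\Ad$ lies in $\GL_{\Cbb}(V)$, is $\Cbb$-semisimple, and satisfies $(\Spec_{\Cbb}g)\subs\{\pm1\}$, so its $(+1)$-eigenspace is a complex subspace on which $g$ fixes vectors, again contradicting Corollary~\ref{stri} unless it is zero. I do not expect a genuine obstacle here: the real content was already spent in establishing the triviality of all stabilizers and the relation $g^2=E$, and this lemma is essentially a formal consequence of those two facts.
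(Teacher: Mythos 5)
Your proof is correct, but it takes a slightly different route from the paper's. The paper applies Proposition~\ref{gla} together with Corollary~\ref{htri} (triviality of~$H$) to conclude $(\Spec_{\Cbb}g)\subs(gH)\cap\T=\{g\}\cap\T$; since the spectrum is nonempty, $g$ must itself be a scalar operator lying in $\T\subs G^0$. You instead combine the preceding lemma ($g^2=E$) with Corollary~\ref{stri} (trivial stabilizers): the $(+1)$\dh eigenspace of the orthogonal involution~$g$ must vanish unless $g=E$, so $g\in\{\pm E\}=\Zc(G^0)\subs G^0$. Your version is marginally more elementary in that it does not invoke the complex structure or the spectrum-containment proposition at this step (though of course $g^2=E$ was itself obtained from Proposition~\ref{gla}), and it delivers the sharper conclusion $\Ker\Ad\subs\{\pm E\}$ directly; the paper's version only needs $H=\{E\}$ and not the involutivity of~$g$. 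Both arguments rest on the same underlying fact\ti that all stabilizers of nonzero vectors are trivial\ti and both are sound.
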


\begin{proof} Let $g\in\Ker\Ad$ be any element. It follows from Proposition~\ref{gla} and Corollary~\ref{htri} that
$(\Spec_{\Cbb}g)\subs\{g\}\cap\T$. On the other hand, $(\Spec_{\Cbb}g)\ne\es$, so, $g\in\T\subs G^0$.
\end{proof}

Since $\Aut(\ggt)=\In(\ggt)$, we have $\Ad(G)=\Ad(G^0)$ implying (with Lemma~\ref{ker0}) $G=G^0(\Ker\Ad)=G^0\cong\SU_2$. Thus,
$\pi_3(G)\bw\cong\pi_3(\SU_2)\cong\pi_3(S^3)\cong\Z$.

Let $S\subs V$ be the unit sphere and $M$ the quotient space $S/G$.

We have $\dim V>8$, $\dim S>7$\~ hence, $S$ and~$M$ are connected topological spaces and, also, $\pi_k(S)=\{e\}$ \ter{$k=1\sco7$}. According to
Corollary~\ref{stri}, the action $G\cln S$ is free. The factorization map $S\thra M$ is a~locally product bundle with fibre~$G$. The corresponding
exact homotopic sequence gives the relations $\pi_k(M)\bw\cong\pi_{k-1}(G)$ \ter{$k=2\sco7$} and $\pi_1(M)\cong G/G^0=\{e\}$. By Lemma~\ref{prop}, $M\cong S^m$
\ter{$m:=\dim S-3>4$}\~ on the other hand, $\pi_4(M)\cong\pi_3(G)\cong\Z$. So, we came to a~contradiction that completely proves Theorem~\ref{main}.

\newpage

{\renewcommand{\refname}{References}
}

\end{document}